\documentclass[leqno,12pt]{amsart}
\usepackage{amsmath}
\usepackage{amssymb,amscd}
\usepackage{amsthm}
\usepackage{latexsym}
\usepackage[myheadings]{fullpage}
\usepackage{color}  

\newtheorem{thm}{Theorem}[section]
\newtheorem{lem}[thm]{Lemma}
\newtheorem{prop}[thm]{Proposition}
\newtheorem{cor}[thm]{Corollary}
\newtheorem{rem}[thm]{Remark}

\newtheorem{ex}[thm]{Example}
\newtheorem{question}[thm]{Question}

\newcommand{\RI}{R \!\Join\! I}
\newcommand{\R}{R[It]}
\newcommand{\m}{\mathfrak m}
\newcommand{\p}{\mathfrak p}
\newcommand{\q}{\mathfrak q}
\newcommand{\Z}{{\mathbb Z}}

\newcommand{\Hom}{{\rm Hom}}
\newcommand{\Ann}{{\rm Ann}}
\newcommand{\RR}{R(I)_{a,b}}

\def\opn#1#2{\def#1{\operatorname{#2}}} 
%
\opn\spec{Spec}
\opn\depth{depth}
\opn\height{ht}

\title{New algebraic properties of quadratic quotients of the Rees algebra}

\author{Marco D'Anna}
\address{Marco D'Anna - Dipartimento di Matematica e Informatica - Universit\`a degli Studi di Catania - Viale Andrea Doria 6 - 95125 Catania - Italy}\email{mdanna@dmi.unict.it}

\author{Francesco Strazzanti}
\address{Francesco Strazzanti - Departamento de \'Algebra \&  Instituto de Matem\'aticas (IMUS) -
Universidad de Sevilla - Avda. Reina Mercedes s/n -
41012 Sevilla - Spain}
\email{francesco.strazzanti@gmail.com}

\thanks{The second author was partially supported by MTM2016-75027-P, MTM2013-46231-P (Ministerio de Economı\'ia y Competitividad) and FEDER}

\subjclass[2010]{13B30, 13H10}

\date{\today}

\begin{document}

\begin{abstract}
We study some properties of a family of rings $R(I)_{a,b}$ that are obtained as quotients of the Rees algebra associated with a ring $R$ and an ideal $I$. In particular, we give a complete description of the spectrum of every member of the family and describe the localizations at a prime ideal. Consequently, we are able to characterize the Cohen-Macaulay and Gorenstein properties, generalizing known results stated in the local case. Moreover, we study when $R(I)_{a,b}$ is an integral domain, reduced, quasi-Gorenstein, or satisfies Serre's conditions.
\end{abstract}

\keywords{Idealization; Amalgamated duplication; Quasi-Gorenstein rings; localizations; Serre's conditions}

\maketitle

\section*{Introduction}

Let $R$ be a commutative ring with unity, let $I \neq 0$ be a proper ideal of $R$ and let $a,b \in R$. In \cite{BDS} the authors introduce and study the family of quotient rings 
$$R(I)_{a,b}=\frac{\R}{(I^2(t^2+at+b))} \ ,$$
where $\R=\bigoplus_{n \geq 0}(I^nt^n)$ is the Rees algebra associated with the ring $R$ with respect to $I$ and
$(I^2(t^2+at+b))$ is the contraction to $\R$ of the ideal
generated by $t^2+at+b$ in $R[t]$.

This family provides a unified approach to Nagata's idealization (with respect to an ideal, see \cite[pag. 2]{n}) and to amalgamated duplication (see \cite{D'A} and \cite{DF}); they can be both obtained as particular members of the family, more precisely they are isomorphic to $R(I)_{0,0}$ and $R(I)_{0,-1}$ respectively. This fact explains why these
constructions produce rings with many common properties; as a
matter of fact, it is shown, in \cite{BDS}, that many properties
of the rings in this family (like, e.g., Krull dimension,
noetherianity and local Cohen-Macaulayness) do not depend on the defining polynomial. One interesting fact about this family is that, if $R$ is an integral domain, we can always find integral domains among its members, whereas the idealization is never reduced and the amalgamated duplication is never an integral domain. Hence, this construction revealed to be useful to construct $R$-algebras (and, in particular, integral domains)
satisfying pre-assigned properties. For instance, in \cite{BDS} it is given a formula for the Hilbert function of $R(I)_{a,b}$ in the local case, that depends only on $R$ and $I$, and in \cite{OST} this formula is used to construct families of one-dimensional Gorenstein local rings with decreasing Hilbert function, solving a problem formulated by M.E. Rossi. 

In a subsequent paper (cf. \cite{BDS2}), the same authors deepen the study of this family of rings in the local case, characterizing when its members are Gorenstein, complete intersection, and almost Gorenstein, proving that these properties do not depend on the particular member chosen in the family, but only on $R$ and $I$. We also notice that other properties of these quotients have been studied in \cite{F}.

In this paper we are interested in understanding the prime spectrum of the rings in the family, in relation to the prime spectrum of the original ring, and in studying the behaviour of the localizations. 
More precisely, we explicitly describe the primes of $R(I)_{a,b}$ lying over a fixed prime $\mathfrak p$ of $R$,
showing that two cases can arise, depending on the reducibility of $t^2+at+b$ in $Q(R/\mathfrak p)[t]$
and (if the polynomial is reducible) on its roots. In case there is only one prime $\mathfrak q$ lying over $\mathfrak p$, then we obtain $(\RR)_{\mathfrak q} \cong R_{\p}(I_{\p})_{a,b}$, while, if there exist two primes
$\mathfrak p_1$ and $\mathfrak p_2$ lying over $\mathfrak p$, then $(\RR)_{\p_i} \cong R_\p$ for $i=1,2$ provided that a technical hypothesis holds (see Proposition \ref{localizations}).

This facts will allow to extend in a natural way some local results contained in the papers cited above and to investigate other relevant properties that depend on localizations, like Serre's conditions. We also notice that the study of localizations has an intrinsic interest and can be applied in many situations
like, e.g., the geometric contest: if we start with a finitely generated $k$-algebra $R$, all the rings in the family are finitely generated $k$-algebras and we can determine whether a prime ramifies or not.

Finally, under the hypotheses that  $t^2+at+b$ is reducible in $R[t]$ and $I$ is regular, we prove that $R(I)_{a,b}$ is quasi-Gorenstein if and only if $R$ satisfies the Serre's condition $(S_2)$ and $I$ is a canonical ideal of $R$. The concept of quasi-Gorenstein rings arises from the theory of linkage and it is a generalization of the notion of Gorenstein rings in the non-Cohen-Macaulay case. It is already known when idealization and amalgamated duplication are quasi-Gorenstein (see \cite{A2} and \cite{BSTY}) and we extend these results to the more general case $R(I)_{a,b}$.
\medskip

The structure of the paper is the following. In the first section we give a complete description of the prime spectrum of $R(I)_{a,b}$ (see Proposition \ref{minimal primes}) and describe the localizations of $R(I)_{a,b}$ at prime ideals (see Proposition \ref{localizations}); as a corollary, we characterize when $R(I)_{a,b}$ is an integral domain and when it is a Cohen-Macaulay or a Gorenstein ring. 
In the second section we study Serre's conditions for $R(I)_{a,b}$ (see Propositions \ref{Sn} and \ref{Rn}). 

Finally, in the last section we consider the particular case $t^2+at+b$ reducible in $R[t]$. More precisely, we study when $R(I)_{a,b}$ is isomorphic either to the idealization or to the amalgamated duplication and we characterize the properties of being reduced and quasi-Gorenstein (see Proposition \ref{reduced} and Theorem \ref{quasi Gorenstein}).

\section{Spectra and localizations}

Throughout this paper $R$ is a commutative ring with identity and $I \neq (0)$ a proper ideal of $R$;
with $Q(R)$ we will denote its total ring of fractions.
In this section we study the prime spectrum of the rings $\RR$. To this aim we recall that the extensions $R \subset \RR \subset R[t]/(t^2+at+b)$ are integral and, given $\p \in \spec R$, the prime ideals of $\RR$ lying over $\p$ depend on the reducibility of $t^2+at+b$ on $Q(R/\p)$, the field of fractions of $R/\p$, see \cite[Proposition 1.9]{BDS}. More precisely, if $t^2+at+b$ is irreducible in $Q(R/\p)[t]$, there will be only one prime of $R[t]/(t^2+at+b)$ lying over $\p$ and, thus, the same holds for $\RR$; on the other hand, when $t^2+at+b$ is reducible, there are two primes of $R[t]/(t^2+at+b)$ lying over $\p$ and they can either contract to the same prime in $\RR$ or to two different primes, depending on $I$, $\p$, and on the factorization of the polynomial.

Assuming that $t^2+at+b$ is reducible in $Q(R/\p)[t]$ and that $\bar\alpha / \bar\gamma$ and $\bar\beta / \bar\delta$ are its roots, we can always choose a representation with $\bar\gamma=\bar\delta$. In this case, it is easy to see that in $Q(R/\p)$ we have $\bar\gamma \bar a=-\bar\alpha-\bar\beta$ and $\bar\gamma^2 \bar b=\bar\alpha \bar\beta$ and, clearly, the same equalities hold in $R/\p$.
We start with a preparatory lemma.

\begin{lem} \label{ideals}
Let $\p$ be a prime ideal of $R$ and suppose that
$t^2+at+b=(t-\bar\alpha/ \bar\gamma)(t- \bar\beta/\bar\gamma)$ in $Q(R/\p) [t]$. Let $\alpha, \beta, \gamma \in R$ such that their classes modulo $\p$ are, respectively, $\bar\alpha, \bar\beta$ and $\bar\gamma$. Then, the two sets
$$
\p_1:=\{r+it \ | \ r \in R, i \in I, \gamma r + \alpha i \in \p \}
$$
$$
\p_2:=\{r+it \ | \ r \in R, i \in I, \gamma r + \beta i \in \p \}
$$
do not depend on the choice of $\alpha$, $\beta$, and $\gamma$ and are prime ideals of $\RR$. Moreover, $\p_1=\p_2$
if and only if $(\alpha-\beta)I \subseteq \p$.
\end{lem}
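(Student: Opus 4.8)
The plan is to realize each $\p_i$ as the kernel of an explicit ring homomorphism into the field $Q(R/\p)$, which makes primality automatic and the independence of the lifts transparent. Recall from \cite{BDS} that every element of $\RR$ is written uniquely as $r+it$ with $r\in R$, $i\in I$, the multiplication being governed by $t^2=-at-b$. Since the roots $\bar\alpha/\bar\gamma$ and $\bar\beta/\bar\gamma$ lie in $Q(R/\p)$, their common denominator satisfies $\bar\gamma\neq 0$, i.e.\ $\gamma\notin\p$; set $\rho_1=\bar\alpha/\bar\gamma$ and $\rho_2=\bar\beta/\bar\gamma$, so that $\rho_k^2+\bar a\rho_k+\bar b=0$ in $Q(R/\p)$. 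For $k=1,2$ I would define $\phi_k\colon \RR\to Q(R/\p)$ by $\phi_k(r+it)=\bar r+\bar i\,\rho_k$. To see this is a well-defined ring homomorphism I would factor it through the Rees algebra: the assignment $\sum_n i_n t^n\mapsto\sum_n \bar i_n\rho_k^n$ is a ring homomorphism $\R\to Q(R/\p)$ that annihilates $I^2(t^2+at+b)$, because each generator evaluates to a multiple of $\rho_k^2+\bar a\rho_k+\bar b=0$, and hence descends to $\RR$; a direct check that products and sums are respected, using $\rho_k^2=-\bar a\rho_k-\bar b$, is the alternative.

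Next I would identify $\p_k$ with $\ker\phi_k$. For $k=1$, the condition $r+it\in\ker\phi_1$ reads $\bar r+\bar i(\bar\alpha/\bar\gamma)=0$; multiplying by $\bar\gamma\neq 0$, which is legitimate in the domain $R/\p$, this is equivalent to $\bar\gamma\bar r+\bar\alpha\bar i=0$, i.e.\ $\gamma r+\alpha i\in\p$, which is exactly the defining condition of $\p_1$; likewise $\p_2=\ker\phi_2$. Two consequences are then immediate: each $\p_k$ is a prime ideal, being the kernel of a nonzero homomorphism into a field; and each $\p_k$ depends only on the root $\rho_k\in Q(R/\p)$ and not on the chosen lifts $\alpha,\beta,\gamma$, since changing the lifts does not alter $\phi_k$. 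Independence can also be seen directly, since replacing $\alpha,\gamma$ by other lifts of $\bar\alpha,\bar\gamma$ leaves the class of $\gamma r+\alpha i$ modulo $\p$ unchanged.

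For the final equivalence, the identity $(\gamma r+\alpha i)-(\gamma r+\beta i)=(\alpha-\beta)i$ does the bookkeeping. If $(\alpha-\beta)I\subseteq\p$, then for every $r+it$ the two conditions $\gamma r+\alpha i\in\p$ and $\gamma r+\beta i\in\p$ coincide, so $\p_1=\p_2$. For the converse I expect the main obstacle: one cannot in general solve $\gamma r\equiv-\alpha i\pmod\p$ with $r\in R$, so there need not be a convenient element of $\p_1$ to test against $\p_2$. The trick I would use is the witness $x=-\alpha i+(\gamma i)t\in\RR$, which lies in $\p_1$ for every $i\in I$ because $\gamma(-\alpha i)+\alpha(\gamma i)=0\in\p$, while $x\in\p_2$ forces $\gamma(\beta-\alpha)i\in\p$ and hence $(\alpha-\beta)i\in\p$, since $\gamma\notin\p$ and $\p$ is prime. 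Thus $\p_1=\p_2$ yields $(\alpha-\beta)i\in\p$ for all $i\in I$, that is $(\alpha-\beta)I\subseteq\p$, completing the equivalence.
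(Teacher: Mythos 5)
Your proof is correct, and it takes a genuinely different route from the paper's on the main assertions. The paper proves that $\p_1$ is a prime ideal by direct computation inside $\RR$: starting from the multiplication rule $(r+it)(s+jt)=rs-ijb+(rj+si-aij)t$, it multiplies through by $\gamma$ and reduces modulo $\p$ using the relations $\gamma a+\alpha+\beta\in\p$ and $\gamma^2b-\alpha\beta\in\p$, arriving in particular at the factorization $(\gamma s+\alpha j)(\gamma r+\alpha i)\in\p$ which gives primality; independence of the lifts is dismissed as an easy verification. You instead exhibit $\p_k$ as the kernel of the evaluation homomorphism $\phi_k\colon\RR\to Q(R/\p)$, $r+it\mapsto\bar r+\bar i\rho_k$, which makes the ideal property, primality (kernel of a unital map into a field), and independence of the lifts all automatic. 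One small point deserves care: the paper defines the ideal you quotient by as the \emph{contraction} to $\R$ of $(t^2+at+b)R[t]$, which a priori could be larger than the ideal generated by the elements $ij(t^2+at+b)$; your "each generator evaluates to zero" remark should therefore be upgraded to the observation that your map is the restriction to $\R$ of the composite $R[t]\to(R/\p)[t]\hookrightarrow Q(R/\p)[t]\to Q(R/\p)$ (reduction followed by evaluation at $\rho_k$), which annihilates all of $(t^2+at+b)R[t]$ and hence its contraction --- this is immediate, so it is an imprecision, not a gap. It is worth noting that your $\phi_k$ is essentially the map $\varphi_\p$ that the paper only introduces later, in the proof of Proposition \ref{minimal primes}; your argument therefore yields part of that proposition for free (the identification of $\p_k$ as the contraction of a prime of $R[t]/(t^2+at+b)$ lying over $\p$), whereas the paper's computation stays elementary and self-contained within $\RR$. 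For the final equivalence your argument and the paper's coincide exactly, down to the same witness element $-\alpha i+\gamma it\in\p_1$.
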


\begin{proof}
The fact that the sets defined above do not depend on the choice of $\alpha$, $\beta$ and $\gamma$ is an easy verification.
In order to prove that they are prime ideals, it is enough to consider $\p_1$, since the proof for $\p_2$ is analogous.

By definition $\gamma \notin \p$, then, $\gamma r + \alpha i\in \p$ if and only if $\gamma(\gamma r + \alpha i)\in \p$. Let $r+it \in \p_1$ and $s+jt \in \RR$. We have $(r+it) (s+jt)=rs-ijb+(rj+si-aij)t$ and, since
$\gamma a +\alpha + \beta$, $\gamma^2 b - \alpha \beta$, $\gamma^2 r + \gamma\alpha i\in \p$, we get, modulo $\p$,
\begin{gather*}
\gamma(\gamma(rs-ijb)+ \alpha (rj+si-aij)) = \\
= \gamma^2 rs - ij \gamma^2 b+ \gamma \alpha rj + \gamma \alpha si- \gamma \alpha aij \equiv \\
\equiv \gamma^2 rs -ij \alpha \beta + \gamma \alpha rj -\gamma^2 rs + \alpha^2 ij+\alpha\beta ij = \\
= \gamma \alpha rj +\alpha^2 ij = \alpha j (\gamma r + \alpha i) \equiv 0 
\end{gather*}
and this means that $(r+it)(s+jt)$ is in $\p_1$, i.e. $\p_1$ is an ideal.

Now we have to prove that $\p_1$ is prime. If we suppose that $(r+it)(s+jt) \in \p_1$, then $\gamma rs - \gamma ijb  + \alpha rj +\alpha si - \alpha ija \in \p$ and, multiplying by $\gamma$, we get, modulo $\p$,
\begin{gather*}
\gamma^2 rs - ij \alpha \beta + \gamma \alpha rj+\gamma \alpha si  + \alpha^2 ij+ij \alpha \beta\equiv \\
\equiv \gamma s (\gamma r+\alpha i)+ \alpha j (\gamma r+ \alpha i)\equiv 
(\gamma s + \alpha j) (\gamma r + \alpha i)\in \p.
\end{gather*}
Since $\p$ is prime, at least one between $\gamma s + \alpha j$ and $\gamma r + \alpha i$ belongs to $\p$, i.e. at least one between $r+it$ and $s+jt$ is in $\p_1$.

As for the last statement, suppose first that $(\alpha-\beta)I \subseteq \p$. If $r+it \in \p_1$, then $\gamma r+ \beta i = \gamma r + \alpha i - (\alpha - \beta) i \in \p$; therefore $\p_1 \subseteq \p_2$ and the other inclusion is analogous. Conversely, we first notice that for every $i \in I$, $- \alpha i + \gamma it \in \p_1$. Since $\p_1=\p_2$, we get $- \gamma \alpha i + \beta i \gamma \in \p$ and, therefore, $-\gamma (\alpha - \beta)i \in \p$. Hence, $(\alpha-\beta)i \in \p$, since $\gamma \notin \p$ and $\p$ is a prime ideal.
\end{proof}

\begin{prop} \label{minimal primes}
Let $\p$ be a prime ideal of $R$.
\begin{enumerate}
\item If $t^2+at+b$ is irreducible in $Q(R/\p)[t]$, then the only prime ideal of $\RR$ lying over $\p$ is
$\q:= \{p+it \ | \ p \in \p, i \in I \cap \p \}$.
\item If $t^2+at+b=(t-\bar \alpha / \bar\gamma)(t- \bar\beta / \bar\gamma)$ in $Q(R/\p)[t]$, then the ideals $\p_1$ and $\p_2$ defined in the previous lemma are the only prime ideals of $\RR$ lying over $\p$.
\end{enumerate}
\end{prop}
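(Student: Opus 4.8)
The plan is to push the classification of primes down from the larger ring along the integral extension recalled in the text. First I would record that $R[t]/(t^2+at+b)$ is module-finite over $R$, hence integral over $\RR$; by the surjectivity (lying-over) theorem for integral extensions, the induced map $\spec\big(R[t]/(t^2+at+b)\big)\to\spec\RR$ is surjective, and it sends a prime lying over $\p$ to a prime of $\RR$ lying over $\p$. Combined with \cite[Proposition 1.9]{BDS}, which gives exactly one prime $\mathfrak P$ of $R[t]/(t^2+at+b)$ over $\p$ in case (1) and (at most) two, say $\mathfrak P_1,\mathfrak P_2$, in case (2), this reduces the whole statement to computing the contractions $\mathfrak P\cap\RR$ and $\mathfrak P_i\cap\RR$. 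A pleasant bonus is that each such contraction is automatically prime and lies over $\p$, so I need not re-prove primality of $\q$ in case (1).

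To compute the contractions I would reduce modulo $\p$ and work in the free $R/\p$-module $S:=(R/\p)[t]/(t^2+\bar a t+\bar b)$, which has basis $\{1,\bar t\}$ because the polynomial is monic of degree $2$. Every element of $\RR$ is represented as $r+it$ with $r\in R$ and $i\in I$, and maps to $\bar r+\bar i\,\bar t\in S$. In case (1) the unique prime upstairs is the kernel of the map into the field $L:=Q(R/\p)[t]/(t^2+\bar a t+\bar b)$; thus $r+it\in\mathfrak P$ if and only if $\bar r+\bar i\,\bar t=0$ in $L$. Since $1,\bar t$ are linearly independent over $Q(R/\p)$, this forces $\bar r=\bar i=0$, that is $r\in\p$ and $i\in I\cap\p$, yielding $\mathfrak P\cap\RR=\q$.

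In case (2) the two primes $\mathfrak P_1,\mathfrak P_2$ are the kernels of the evaluation homomorphisms $R[t]/(t^2+at+b)\to Q(R/\p)$ sending $t$ to the roots $\bar\alpha/\bar\gamma$ and $\bar\beta/\bar\gamma$ respectively; these are well defined precisely because $\bar\gamma\bar a=-\bar\alpha-\bar\beta$ and $\bar\gamma^2\bar b=\bar\alpha\bar\beta$. Hence $r+it\in\mathfrak P_1$ if and only if $\bar r+\bar i\,\bar\alpha/\bar\gamma=0$ in $Q(R/\p)$, and clearing the denominator $\bar\gamma\neq 0$ this reads $\bar\gamma\bar r+\bar\alpha\bar i=0$, i.e.\ $\gamma r+\alpha i\in\p$, which is exactly the defining condition of $\p_1$. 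The identical computation with $\beta$ gives $\mathfrak P_2\cap\RR=\p_2$, and by the first paragraph these exhaust the primes of $\RR$ over $\p$.

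The step requiring the most care is the identification of the primes of $R[t]/(t^2+at+b)$ over $\p$ with the evaluation/quotient maps above, which really amounts to understanding the fibre ring $S\otimes_{R/\p}Q(R/\p)$. When the two roots are distinct this fibre is $Q(R/\p)\times Q(R/\p)$ (or the field $L$ in the irreducible case) and the identification is transparent; the delicate point is a repeated root, where the fibre is a non-reduced Artinian local ring with a single prime. In that degenerate situation $\bar\alpha/\bar\gamma=\bar\beta/\bar\gamma$ forces $\alpha-\beta\in\p$, hence $(\alpha-\beta)I\subseteq\p$ and $\p_1=\p_2$ by Lemma \ref{ideals}, so the two contractions collapse to one and the statement still holds. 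Checking this compatibility, and making sure the reduction modulo $\p$ interacts correctly with the description of $\RR$ as $\{r+it\}$, is the part I would verify most carefully.
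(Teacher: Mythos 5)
Your proof is correct, and it shares the paper's skeleton --- pass to the integral extension $\RR \subset R[t]/(t^2+at+b)$, use \cite[Proposition 1.9]{BDS} to identify the primes upstairs lying over $\p$, and contract them to $\RR$ --- but your key step in case (2) is genuinely different from the paper's. The paper verifies only one inclusion, $\p_1 \subseteq \bar J \cap \RR$, and then needs Lemma \ref{ideals} (primality of $\p_1$ and $\p_1 \cap R = \p$) together with Incomparability to promote that inclusion to an equality. You instead compute the contraction exactly: the prime upstairs is the kernel of the evaluation map $R[t]/(t^2+at+b) \to Q(R/\p)$, $t \mapsto \bar\alpha/\bar\gamma$, and $r+it$ lies in this kernel if and only if $\bar\gamma\bar r + \bar\alpha\bar i = 0$, i.e.\ $\gamma r + \alpha i \in \p$, which is literally the definition of $\p_1$. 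This buys you independence from both Incomparability and the computational content of Lemma \ref{ideals}: primality of $\p_1,\p_2$ and the fact that they lie over $\p$ come for free as contractions of primes, and the only input you take from that lemma is its final assertion, which you use (appropriately) to handle the repeated-root case --- a degenerate situation the paper's proof passes over silently, and which you are right to isolate, since with a double root there is a single prime upstairs and the two contractions must collapse. What the paper's route buys in exchange is economy: it reuses Lemma \ref{ideals}, already proved and needed elsewhere, and only has to check a one-directional membership. Your case (1) coincides with the paper's, except that you make explicit the linear-independence argument behind the contraction formula, which the paper dismisses as straightforward.
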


\begin{proof}
The first case is straightforward, because the prime ideal of $\RR$ lying over $\p$ is
$$\p \frac{R[t]}{(t^2+at+b)} \cap \RR = \{p+it \, | \, p \in \p, i \in I \cap \p \}.$$
As for the second case, we easily observe that $\p_1 \cap R = \p = \p_2 \cap R$ and, hence, $\p_1$ and $\p_2$ are prime ideals lying over $\p$.
In fact, in the proof of \cite[Proposition 1.9]{BDS} it is proved that a prime ideal of $\RR$ lying over $\p$ is the contraction to $\RR$ of the ideals $J=\varphi_{\p}^{-1}((t-\bar\alpha / \bar\gamma))$ and $H=\varphi_{\p}^{-1}((t-\bar\beta / \bar\gamma))$, where $\varphi_{\p}$ is the composition of the canonical homomorphisms $R[t] \rightarrow (R/\p)[t]
\hookrightarrow Q(R/\p)[t]$. 
Since $J$ and $H$ contain $\p R[t]$, it is easy to see that the extensions of $\p_1$ and $\p_2$ in $R[t]/(t^2+at+b)$ are contained in the images $\bar J$ and $\bar H$ in the same ring of $J$ and $H$, respectively. In fact, if $r+it \in \p_1$, then $\gamma r + \alpha i \in \p$ and
$$
\varphi_{\p}(r+it)=\frac{\bar r \bar\gamma}{\bar\gamma} +\bar i t = \bar i t -\frac{\bar\alpha \bar i}{\bar \gamma} + \frac{\bar\gamma \bar r + \bar\alpha \bar i}{\bar\gamma}= 
\bar i\left(t-\frac{\bar\alpha}{\bar\gamma}\right)\in \left(t-\frac{\bar\alpha}{\bar\gamma}\right)Q\left(\frac{R}{\p}\right)[t];
$$
therefore, $r+it \in \bar J\cap\RR$. Analogously, $\p_2 \subset \bar H\cap \RR$. By Incomparability, see \cite[Corollary 4.18]{E}, we get that $\p_1=\bar J\cap \RR$ and $\p_2=\bar H\cap \RR$ and this concludes the proof.
\end{proof}

In \cite[Remark 1.10.3]{BDS} it is noted that $\R(I)_{a,b}$ is an integral domain, if $t^2+at+b$ is irreducible in $Q(R)[t]$ and $R$ is an integral domain; thanks to the previous proposition we can prove the converse.

\begin{cor} \label{domain}
$\RR$ is an integral domain if and only if $R$ is an integral domain and $t^2+at+b$ is irreducible in $Q(R)[t]$.
\end{cor}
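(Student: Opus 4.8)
The backward implication is already recorded in \cite[Remark 1.10.3]{BDS} (if $R$ is a domain and $t^2+at+b$ is irreducible in $Q(R)[t]$, then $\RR$ is a domain), so the real content is the forward implication, and I would assume from now on that $\RR$ is an integral domain. The claim that $R$ is a domain is immediate: $R$ sits inside $\RR$ as the degree-zero part of the construction (the extensions $R \subset \RR \subset R[t]/(t^2+at+b)$ recalled at the start of the section), and a subring of a domain is a domain. In particular $Q(R)$ is a field, and $(0)$ is a prime of $R$ with $Q(R/(0))=Q(R)$.

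The heart of the matter is the irreducibility of $t^2+at+b$ over $Q(R)$, which I would establish by contradiction, producing explicit zero divisors. Suppose the polynomial factors and write its roots as $\alpha/\gamma$ and $\beta/\gamma$ with $\alpha,\beta,\gamma\in R$ and $\gamma\neq 0$. Since $R$ is a domain, the identities $\gamma a = -(\alpha+\beta)$ and $\gamma^2 b = \alpha\beta$ noted before Lemma \ref{ideals} (applied to $\p=(0)$) hold already in $R$, because equality in $Q(R)$ forces equality in $R$. Now fix any $i\in I$ with $i\neq 0$, which exists since $I\neq(0)$, and set
$$x := \gamma i\, t - \alpha i, \qquad y := \gamma i\, t - \beta i.$$
Both lie in $\RR$, as their $t$-coefficient $\gamma i$ belongs to $I$. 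Computing inside $R[t]/(t^2+at+b)\supseteq\RR$ and substituting $t^2\equiv -at-b$, the two identities above give $(\gamma t-\alpha)(\gamma t-\beta)=0$, whence $xy = i^2(\gamma t-\alpha)(\gamma t-\beta)=0$. On the other hand $x\neq 0$ and $y\neq 0$, because their $t$-coefficient $\gamma i$ is nonzero ($R$ is a domain, $\gamma\neq 0$, $i\neq 0$). Thus $\RR$ has nonzero zero divisors, contradicting that it is a domain, and the irreducibility follows.

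The only genuinely delicate point I anticipate is the possibility of a repeated root $\alpha=\beta$. In that situation the two primes $\p_1,\p_2$ of Proposition \ref{minimal primes} lying over $(0)$ actually coincide, by the last clause of Lemma \ref{ideals} (since $(\alpha-\beta)I=0$), so one cannot reach a contradiction merely by counting primes over $(0)$: the spectrum of $\RR$ could still have a single minimal prime, and an argument based only on Lying Over or Incomparability would stall. This is precisely why I favour the explicit construction above, which is insensitive to this distinction: when $\alpha=\beta$ it yields $x=y$ with $x\neq 0$ and $x^2=0$, that is, a nonzero nilpotent, again incompatible with $\RR$ being a domain. Hence the same computation simultaneously settles the split case and the repeated-root case without any need to separate them.
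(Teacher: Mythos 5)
Your proof is correct, and for the key implication it takes a more elementary route than the paper's. The paper derives the converse from the spectrum machinery of the section: if $t^2+at+b$ were reducible in $Q(R)[t]$, the primes $\p_1,\p_2$ of Proposition \ref{minimal primes} lying over $(0)$ would be minimal primes of $\RR$ (integrality plus Incomparability), hence both equal to $(0)\RR$ if $\RR$ is a domain, contradicting the fact that $\p_1$ contains the nonzero element $i\alpha-i\gamma t$. You bypass the spectrum altogether: from the identities $\gamma a=-(\alpha+\beta)$ and $\gamma^2 b=\alpha\beta$, valid in $R$ because $R$ embeds in $Q(R)$, you get $xy=i^2\gamma^2(t^2+at+b)=0$ in $R[t]/(t^2+at+b)\supseteq\RR$, where $x=i(\gamma t-\alpha)$ and $y=i(\gamma t-\beta)$ are nonzero since their $t$-coefficient $\gamma i$ is nonzero in the domain $R$; this exhibits explicit zero divisors. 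The two arguments are close kin --- your $x$ is, up to sign, exactly the paper's witness in $\p_1$, and $y$ is the analogous element of $\p_2$ --- but yours needs neither Lemma \ref{ideals} nor Proposition \ref{minimal primes} nor the minimality-via-integrality step, so it is self-contained, whereas the paper's is shorter given the machinery the section has already established. One remark: your closing concern that an argument ``counting primes over $(0)$'' would stall when $\alpha=\beta$ is aimed at a strawman, since the paper's actual proof, like yours, produces a nonzero element of $\p_1=(0)\RR$ and is equally insensitive to whether $\p_1=\p_2$. (For the direct implication, citing \cite[Remark 1.10.3]{BDS} is legitimate --- the paper itself points to that remark just before the corollary --- though the paper re-derives it from Proposition \ref{minimal primes}(1), which shows the prime over $(0)$ is $(0)\RR$ itself.)
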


\begin{proof}
Since $R \subseteq \RR$, we can assume that $R$ is an integral domain.
If $t^2+at+b$ is irreducible in $Q(R)[t]$, the ideal
$\mathfrak q= \{p+it \ | \ p \in (0)R, i \in I \cap (0)R \}=(0)\RR$ is prime and, thus, $\RR$ is an integral domain.
Conversely, suppose by contradiction that $t^2+at+b$ is reducible in $Q(R)[t]$ and let $\p_1, \p_2$ be the prime ideals of $\RR$ lying over $(0)R$.
They are minimal primes of $\RR$ and, since it is an integral domain, they are equal to $(0)\RR$. On the other hand, it is easy to see that, for any $i \in I$, the element $i \alpha - i \gamma t$ is in $\p_1$ and it is different from zero, because $R$ is an integral domain; contradiction.
\end{proof}

In order to study the behaviour of localizations, notice that, since the ideals $\p_i$ are independent of the choice of the elements $\alpha$, $\beta$ and $\gamma$, we can choose them in such a way that $a\gamma=-\alpha-\beta$ and $b\gamma^2=\alpha \beta+p$ in $R$, where $p \in \p$. 
This choice is not unique, in fact for any $q \in \p$, substituting $\alpha$ with $\alpha+q$ and $\beta$
with $\beta-q$, the first equality still holds and the second one is modified up to a summand in $\p$.

\begin{prop} \label{localizations}
Let $\p$ be a prime ideal of $R$.
\begin{enumerate}
\item Suppose that $t^2+at+b$ is irreducible in $Q(R/\p)[t]$ and let $\q$ be the prime ideal of $\RR$ lying over $\p$. Then, $(\RR)_{\mathfrak q} \cong R_{\p}(I_{\p})_{a,b}$.
\item Suppose that $t^2+at+b=(t-\bar\alpha / \bar\gamma)(t- \bar\beta / \bar\gamma)$ in $Q(R/\p)[t]$ and let $\p_1, \p_2$ be the prime ideals of $\RR$ lying over $\p$.
\begin{enumerate}
    \item If $(\alpha-\beta)I \subseteq \p$ (i.e. $\p_1=\p_2$), then $(\RR)_{\p_i} \cong R_\p(I_\p)_{a,b}$ for $i=1,2$.
    \item If $(\alpha-\beta)I \nsubseteq \p$, i.e. there exists $\lambda \in I$ such that $(\alpha-\beta)\lambda \notin \p$, then $(\RR)_{\p_i} \cong R_\p$ for $i=1,2$, provided that there exists a choice of $\alpha$, $\beta$ and $\gamma$, such that $a\gamma=-\alpha-\beta$ and $b\gamma^2=\alpha \beta+p$ in $R$, where $p \in \p$ and $p\lambda I=0$. In particular, the last hypothesis holds if $t^2+at+b$ is reducible also in $Q(R)[t]$.
\end{enumerate}
\end{enumerate}
\end{prop}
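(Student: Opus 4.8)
The plan is to treat all cases by first localizing at the multiplicative set $S=R\setminus\p$ and then, where needed, localizing once more. Since $\RR=R\oplus It$ as an $R$-module, with the twisted multiplication $(r+it)(s+jt)=rs-ijb+(rj+si-aij)t$ coming from $t^2=-at-b$, and since localization commutes with this decomposition, I would first record the isomorphism of $R_\p$-algebras
$$S^{-1}(\RR)\;\cong\;R_\p\oplus (I_\p)\,t\;=\;R_\p(I_\p)_{a,b}.$$
Because each prime in the statement contracts to $\p$, we have $S\subseteq\RR\setminus\q$ (resp. $S\subseteq\RR\setminus\p_i$), so by transitivity of localization $(\RR)_\q=(S^{-1}\RR)_{S^{-1}\q}$, and likewise for $\p_i$. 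Thus the task reduces to understanding a single further localization of $R_\p(I_\p)_{a,b}$.

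For parts (1) and (2a) it then suffices to show that $R_\p(I_\p)_{a,b}$ is \emph{already} local, with maximal ideal the image of $\q$ (resp. of $\p_i$). The extension $R_\p\subseteq R_\p(I_\p)_{a,b}$ is integral, so every maximal ideal lies over the unique maximal ideal $\p R_\p$ of $R_\p$; on the other hand, Proposition \ref{minimal primes} shows there is a unique prime over $\p R_\p$ (the polynomial is irreducible over $Q(R/\p)=R_\p/\p R_\p$ in case (1); in case (2a) the relation $(\alpha-\beta)I\subseteq\p$ localizes to $(\alpha-\beta)I_\p\subseteq\p R_\p$, so Lemma \ref{ideals} over $R_\p$ forces $\p_1=\p_2$). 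Hence $R_\p(I_\p)_{a,b}$ is local, the extra localization is trivial, and the claimed isomorphism follows.

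The substantive case is (2b). Here I would work directly in $\RR$ with the two elements $u:=-\alpha\lambda+\gamma\lambda\,t$ and $v:=-\beta\lambda+\gamma\lambda\,t$, both of which lie in $\RR$ because $\gamma\lambda\in I$. A direct computation with the product formula, substituting the chosen relations $a\gamma=-\alpha-\beta$ and $b\gamma^2=\alpha\beta+p$, shows that the constant term of $uv$ equals $-p\lambda^2$ while its $t$-coefficient cancels identically; the hypothesis $p\lambda I=0$ gives in particular $p\lambda^2=0$, so $uv=0$. Testing membership via Lemma \ref{ideals}, one finds $\gamma(-\alpha\lambda)+\alpha(\gamma\lambda)=0\in\p$, so $u\in\p_1$, whereas $\gamma(-\beta\lambda)+\alpha(\gamma\lambda)=\gamma\lambda(\alpha-\beta)\notin\p$ (note $\lambda\notin\p$, else $(\alpha-\beta)\lambda\in\p$), so $v\notin\p_1$. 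Therefore $v$ is a unit in $(\RR)_{\p_1}$, and $uv=0$ forces $u=0$ there; since $\gamma\lambda\notin\p$ is invertible, this yields the crucial relation $t=\alpha/\gamma$ in $(\RR)_{\p_1}$.

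Finally I would promote this to the isomorphism with $R_\p$, and this is precisely where the technical hypothesis is indispensable. Evaluating the polynomial at $\alpha/\gamma$ gives $(\alpha/\gamma)^2+a(\alpha/\gamma)+b=p/\gamma^2$, and since $p\lambda^2=0$ with $\lambda^2\notin\p$ we get $p=0$ in $R_\p$, so $\alpha/\gamma$ is a genuine root over $R_\p$. Consequently $t\mapsto\alpha/\gamma$ defines a ring homomorphism $\phi\colon\RR\to R_\p$ sending $r+it$ to $(\gamma r+\alpha i)/\gamma$; it carries $\RR\setminus\p_1$ into units of $R_\p$ and so induces $\tilde\phi\colon(\RR)_{\p_1}\to R_\p$. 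Its inverse is the localized structure map $R_\p\to(\RR)_{\p_1}$: one composite is the identity on $R_\p$ by construction, and the other is the identity because $t=\alpha/\gamma$ in $(\RR)_{\p_1}$. The case of $\p_2$ follows symmetrically with $t=\beta/\gamma$, and if $t^2+at+b$ is reducible already in $Q(R)[t]$ one may choose $\alpha,\beta,\gamma$ with $b\gamma^2=\alpha\beta$ exactly, making $p=0$ and the hypothesis automatic. I expect the main obstacle to be the bookkeeping that establishes $uv=0$ and isolates exactly the relation $p\lambda^2=0$ needed to make $\alpha/\gamma$ a root over $R_\p$.
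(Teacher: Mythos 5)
Your proof is correct, and for parts (1) and (2a) it follows a genuinely different route from the paper's. The paper argues entirely with explicit fractions: given $(r+it)/(s+jt)$ in the localization, it multiplies numerator and denominator by a conjugate-type element ($s-aj-jt$ in case (1), $ja-s+jt$ in case (2a)) and checks by hand that the resulting assignment is an injective and surjective ring homomorphism onto $R_\p(I_\p)_{a,b}$. You instead interpose the intermediate ring $S^{-1}(\RR)$, $S=R\setminus\p$, identify it with $R_\p(I_\p)_{a,b}$ via the decomposition $\RR=R\oplus It$ of \cite{BDS}, and then note that this ring is local: by integrality its maximal ideals lie over $\p R_\p$, and Proposition \ref{minimal primes} together with Lemma \ref{ideals} applied over $R_\p$ (the hypothesis $(\alpha-\beta)I\subseteq\p$ localizes) leaves a single prime there, so the second localization is trivial. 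This is shorter and more conceptual; its only overhead is justifying that the construction commutes with localization, a fact of independent use (it is the localization analogue of Lemma \ref{Completion}). For part (2b) your argument is essentially the paper's computation repackaged: the paper's element $-\beta\lambda\gamma+\gamma^2\lambda t$ is $\gamma v$ in your notation, and the identity it verifies using $pi\lambda=0$ is your $uv=-p\lambda^2=0$. The difference lies in how the computation is exploited: the paper proves directly that the natural map $g_1\colon R_\p\to(\RR)_{\p_1}$ is surjective (injectivity being a quick check), while you deduce from $p\lambda^2=0$ and $\lambda^2\notin\p$ that $p=0$ in $R_\p$, so that $\alpha/\gamma$ is an honest root of $t^2+at+b$ over $R_\p$, and you build the evaluation map $r+it\mapsto(\gamma r+\alpha i)/\gamma$ as a two-sided inverse of $g_1$. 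That packaging makes the mechanism transparent (localizing at $\p_1$ kills the branch $t-\alpha/\gamma$) and avoids separate injectivity and surjectivity arguments; both versions handle the final claim (reducibility in $Q(R)[t]$ gives $p=0$) identically.
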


\begin{proof}
(1) We have $s+jt \in \RR \setminus \mathfrak{q}$ if and only if at least one between $s$ and $j$ is in $R \setminus \p$.
Given an element $(r+it)/(s+jt) \in (\RR)_{\mathfrak q}$, we can multiply it by $(s-aj-jt)/(s-aj-jt)$; in fact, $s-aj-jt \in \RR \setminus \mathfrak{q}$,
if $j \in R \setminus \p$, but this also happens if $j \in \p$ and $s \in R \setminus \p$,
because in this case $s-aj \in R \setminus \p$.
Hence, we get an injective homomorphism between
$(\RR)_{\mathfrak q}$ and $R_{\p}(I_{\p})_{a,b}$ given by
$$
\frac{r+it}{s+jt} \longmapsto \frac{r+it}{s+jt} \cdot \frac{(s-aj-jt)}{(s-aj-jt)} = \frac{rs-ajr+ijb}{s^2-ajs+bj^2}+\frac{si-rj}{s^2-ajs+bj^2}t.
$$
\indent Moreover, it is surjective because a generic element $r/s + (i/s')t$ comes from
$(rs'+ist)/(ss') \in (\RR)_{\mathfrak{q}}$. \\
(2) (a) We recall that in this case $\p_1=\p_2$. If $s+jt \in \RR \setminus \p_1$, then $ja-s+jt \notin \p_1$, because $\gamma ja-\gamma s + \alpha j=-j\alpha -j \beta - \gamma s + \alpha j \notin \p$, since $s+jt \notin \p_2$. Therefore, given an element $(r+it)/(s+jt) \in (\RR)_{\p_1}$, one has
$$
\frac{r+it}{s+jt} \cdot \frac{ja-s+jt}{ja-s+jt} =
\frac{rja-rs-bij}{sja-s^2-bj^2} + \frac{rj-si}{sja-s^2-bj^2}t.
$$
Clearly, $sja-s^2-bj^2 \in R \setminus \p$, because $\p_1 \cap R= \p$, therefore, we get a well-defined ring homomorphism
$$
f: (\RR)_{\p_1} \rightarrow R_\p(I_\p)_{a,b}
$$
$$
f \left( \frac{r+it}{s+jt} \right)=
\frac{rja - rs -bij}{sja-s^2-bj^2} + \frac{rj-si}{sja-s^2-bj^2}t
$$
that is injective by construction.
As for the surjectivity, if $\frac{r}{s_1}+\frac{i}{s_2}t$ is an element of $R_\p(I_\p)_{a,b}$,
it is easy to see that this is equal to $f(\frac{rs_2+is_1t}{s_1 s_2})$. Hence, $f$ is an isomorphism and, therefore, $(\RR)_{\p_i} \cong R_{\p}(I_{\p})_{a,b}$ for $i=1,2$. \\
(2) (b) Consider the map $g_1 : R_\p \rightarrow (\RR)_{\p_1}, \ g_1 \left( \frac{r}{s} \right) = \frac{r}{s}$.
Clearly, this is well defined and is an injective ring homomorphism. As for the surjectivity consider a generic $\frac{r+it}{s+jt} \in (\RR)_{\p_1}$ and let $\lambda$ be an element of $I$ such that $\lambda (\alpha - \beta) \notin \p$. Then, $- \beta \lambda \gamma + \gamma^2 \lambda t \notin \p_1$ and it is easy to see that
$(r+it)(-\beta \lambda \gamma + \gamma^2 \lambda t)=-r\beta \lambda \gamma - \alpha \beta i \lambda - pi\lambda+(r \gamma^2 \lambda-i \beta \lambda \gamma + \alpha i \gamma \lambda + \beta i \gamma \lambda)t=(r \gamma + i \alpha)(- \beta \lambda + \gamma \lambda t)$, since $pi \lambda=0$. It follows that
$$
\frac{r+it}{s+jt} \cdot \frac{-\beta \lambda \gamma + \gamma^2 \lambda t}{-\beta \lambda \gamma + \gamma^2 \lambda t} =
\frac{r \gamma +i \alpha}{s \gamma + j \alpha}.
$$
Hence, $g_1 \left(\frac{r \gamma+i \alpha}{s \gamma + j \alpha} \right) = \frac{r+it}{s+jt}$ and $(\RR)_{\p_1} \cong R_{\p}$. The same argument can be applied to $(\RR)_{\p_2}$.
Finally, note that if $t^2+at+b$ is reducible in $Q(R)[t]$, then $p=0$.
\end{proof}

\begin{question}
With the notation of the previous proposition, if $(\alpha-\beta)I \nsubseteq \p$ and $p\lambda I \neq 0$, for any possible choice of $p$ and $\lambda$, is it still true that $(R(I)_{a,b})_{\p_i}$ is isomorphic to $R_\p$? 
\end{question}

We recall that an ideal is said to be regular if it contains a regular element.
In the light of the previous proposition, \cite[Proposition 2.7]{BDS} and \cite[Corollary 1.4]{BDS2}, we can immediately state the following corollary:

\begin{cor}
Let $R$ be a ring, let $I$ be an ideal of $R$ and let $a,b \in R$. Assume that $t^2+at+b$ is reducible in $Q(R)[t]$. Denote by $\mathbb{M}$ the set of all the maximal ideals $\m$ of $R$ except those for which $t^2+at+b=(t-\alpha_{\mathfrak m}/\gamma_{\mathfrak m})(t-\beta_{\mathfrak m}/\gamma_{\mathfrak m})$ in $(R/\m)[t]$ and $(\alpha_{\mathfrak m}-\beta_{\mathfrak m})I \nsubseteq \m$. Then:
\begin{enumerate}
\item The ring $\RR$ is Cohen-Macaulay if and only if $R$ is Cohen-Macaulay and $I_{\m}$ is a maximal Cohen-Macaulay $R_{\m}$-module for all $\m \in \mathbb{M}$;
\item Assume that $I_\m$ is regular for all $\m \in \mathbb{M}$. The ring $\RR$ is Gorenstein if and only if $R$ is Cohen-Macaulay, $I_{\m}$ is a canonical ideal of $R_{\m}$ for all $\m \in \mathbb{M}$, and $R_{\m}$ is Gorenstein for all $\m \notin \mathbb{M}$.
\end{enumerate}
\end{cor}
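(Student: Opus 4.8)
The plan is to reduce both statements to the local characterizations \cite[Proposition 2.7]{BDS} and \cite[Corollary 1.4]{BDS2} by testing the Cohen-Macaulay and Gorenstein properties on localizations. Working under the (implicit) Noetherian hypothesis, so that $\RR$ is module-finite and hence Noetherian over $R$, recall that a Noetherian ring is Cohen-Macaulay (resp. Gorenstein) if and only if all its localizations at maximal ideals are. Since $R \subseteq \RR$ is integral, every maximal ideal $\mathfrak n$ of $\RR$ contracts to a maximal ideal $\m=\mathfrak n\cap R$, and by Lying Over every maximal ideal of $R$ is such a contraction; hence it suffices to run over the maximal ideals $\m$ of $R$ and, for each, over the primes of $\RR$ above $\m$ listed in Proposition \ref{minimal primes}.

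First I would read off these localizations from Proposition \ref{localizations} and match them against the definition of $\mathbb{M}$. If $\m\in\mathbb{M}$, then either $t^2+at+b$ is irreducible in $(R/\m)[t]$, or it factors with $(\alpha_\m-\beta_\m)I\subseteq\m$; in either case there is a single prime of $\RR$ over $\m$, and by parts (1) and (2)(a) its localization is isomorphic to $R_\m(I_\m)_{a,b}$. If instead $\m\notin\mathbb{M}$, the polynomial factors and $(\alpha_\m-\beta_\m)I\nsubseteq\m$, so there are two distinct maximal ideals $\p_1,\p_2$ over $\m$, each localizing to $R_\m$ by part (2)(b). Applying (2)(b) requires its technical hypothesis, and this is exactly where the standing assumption is used: since $t^2+at+b$ is reducible already in $Q(R)[t]$, one may choose $\alpha,\beta,\gamma$ globally with $a\gamma=-\alpha-\beta$ and $b\gamma^2=\alpha\beta$, that is with $p=0$, so that $p\lambda I=0$ holds trivially. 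Thus $\mathbb{M}$ is precisely the set of maximal ideals over which $\RR$ localizes to a copy of $R_\m(I_\m)_{a,b}$, while over the remaining maximal ideals it localizes to $R_\m$.

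It then remains to insert the local results and collect the equivalences. For (1), over $\m\in\mathbb{M}$, \cite[Proposition 2.7]{BDS} gives that $R_\m(I_\m)_{a,b}$ is Cohen-Macaulay if and only if $R_\m$ is Cohen-Macaulay and $I_\m$ is a maximal Cohen-Macaulay $R_\m$-module, while over $\m\notin\mathbb{M}$ the localization $R_\m$ contributes only the condition that $R_\m$ be Cohen-Macaulay; since $R$ is Cohen-Macaulay exactly when every $R_\m$ is, and the module condition on $I_\m$ appears only over $\mathbb{M}$, this yields statement (1). For (2), the regularity of $I_\m$ over $\mathbb{M}$ lets \cite[Corollary 1.4]{BDS2} apply, so over $\m\in\mathbb{M}$ the ring $R_\m(I_\m)_{a,b}$ is Gorenstein iff $R_\m$ is Cohen-Macaulay and $I_\m$ is a canonical ideal of $R_\m$, whereas over $\m\notin\mathbb{M}$ the localization contributes that $R_\m$ be Gorenstein; as Gorenstein implies Cohen-Macaulay, the conditions over all maximal ideals jointly force $R$ Cohen-Macaulay and leave exactly the requirements stated in (2). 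The routine parts are the testing on localizations and the final bookkeeping; the one point demanding care---the main obstacle---is confirming that $\mathbb{M}$ matches the case division of Proposition \ref{localizations} and, in particular, verifying the technical hypothesis of case (2)(b), which is precisely what the global reducibility of $t^2+at+b$ secures.
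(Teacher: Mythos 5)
Your proposal is correct and is exactly the argument the paper intends: the paper states this corollary as an immediate consequence of Proposition \ref{localizations} together with the local characterizations in \cite[Proposition 2.7]{BDS} and \cite[Corollary 1.4]{BDS2}, which is precisely the reduction you carry out (testing Cohen--Macaulayness and Gorensteinness on localizations at maximal ideals, matching $\mathbb{M}$ to the case division of Proposition \ref{localizations}, and noting that global reducibility in $Q(R)[t]$ supplies the technical hypothesis of case (2)(b) with $p=0$). Your write-up merely makes explicit the bookkeeping the paper leaves to the reader.
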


\begin{ex} \rm Let $k$ be an algebraically closed field of characteristic different from $2$ and assume that $R \cong k[x_1,\dots,x_n]/J$ is a domain.
Let $b \in R$ such that $t^2-b$ is irreducible in $Q(R)[t]$ (it is proved in \cite[Corollary 2.6]{BDS} that we can always find infinitely many such $b$), let $I$ be an ideal of $R$ and consider the ring $R(I)_{0,-b}$.
We can present it as a quotient of $k[x_1,\dots,x_n,y_1,\dots,y_m]$, where $m$ is the cardinality of a minimal set of generators of $I$; Corollary \ref{domain} implies that this is an integral domain. For any maximal ideal $\m_Q$ (corresponding to the point $Q$ in the affine variety $\mathcal V(J)$) the polynomial $t^2-b$ is reducible in $(R/\m_Q)[t]\cong k[t]$, since $k$ is algebraically closed; moreover, if $b\notin \m_Q$ and
if $\alpha \in R$ is such that $t^2-b=(t-\bar\alpha)(t+\bar{\alpha})$, 
we have that $\alpha \notin \m_Q$, i.e. the condition $2\alpha I \subset \m_Q$ is equivalent to 
$I \subset \m_Q$. Hence, $R(I)_{0,-b}$ is the coordinate ring of an affine variety double covering $\mathcal V(J)$, with ramification points $Q$ corresponding to the roots of $b$ and to those points $Q$, such that the corresponding ideal $\m_Q$ contains $I$.
By Proposition \ref{localizations}, the points of the double covering lying over a 
ramification point are all singular, since, by \cite[Remark 2.4]{BDS}, a local ring of the form
$\RR$ is regular if and only if $R$ is regular and $I=0$.
\end{ex}

\section{Serre's conditions}

A noetherian ring $R$ satisfies Serre's condition $(S_n)$ if
$\depth R_{\p} \geq \min\{n, \dim R_{\p}\}$ for any $\p \in \spec R$.
In the next proposition we study Serre's condition $(S_n)$ for $R(I)_{a,b}$, generalizing the particular case of the amalgamated duplication studied in \cite{BSTY}.
In this section we assume that $t^2+at+b$ is reducible in $Q(R)[t]$ and, if it is also reducible in $Q(R/\p)[t]$, we write $t^2+at+b=(t-\bar\alpha_\p/\bar\gamma_\p)(t-\bar\beta_\p/\bar\gamma_\p)$. We recall that $R(I)_{a,b}$ is noetherian if and only if $R$ is noetherian (cf. \cite[Proposition 1.11]{BDS}).

\begin{prop} \label{Sn}
Let $R$ be a noetherian ring. Then, $\RR$ satisfies Serre's condition $(S_n)$ if and only if $R$ satisfies Serre's condition $(S_n)$ and
$\depth I_\p \geq \min \{n, \dim R_\p \}$ for all $\p \in \spec R$ such that either $t^2+at+b$ is irreducible in $Q(R/\p)[t]$ or $(\alpha_\p - \beta_\p) I \subseteq \p$.
\end{prop}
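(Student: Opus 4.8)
The plan is to reduce Serre's condition for $\RR$ to conditions on $R$ and $I$ after localizing at each prime of $R$, exploiting the explicit identification of the localizations of $\RR$ provided by Proposition \ref{localizations}. First I would observe that every prime $\q$ of $\RR$ contracts to a prime $\p=\q\cap R$ of $R$, so the family of inequalities $\depth(\RR)_\q\ge\min\{n,\dim(\RR)_\q\}$ defining $(S_n)$ can be organized according to $\p$ and to the mutually exclusive cases of Proposition \ref{minimal primes}: (i) $t^2+at+b$ irreducible in $Q(R/\p)[t]$, with a single prime over $\p$; (ii) reducible with $(\alpha_\p-\beta_\p)I\subseteq\p$, so that $\p_1=\p_2$; and (iii) reducible with $(\alpha_\p-\beta_\p)I\nsubseteq\p$, so that $\p_1\ne\p_2$. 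Since we are assuming $t^2+at+b$ reducible in $Q(R)[t]$, the technical hypothesis of Proposition \ref{localizations}(2)(b) is automatic (one has $p=0$), whence in case (iii) both localizations satisfy $(\RR)_{\p_i}\cong R_\p$, while in cases (i) and (ii) one has $(\RR)_\q\cong R_\p(I_\p)_{a,b}$.

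Next I would record the two standard facts that convert the condition at $\q$ into data over $R_\p$. Because $\RR$ is module-finite over $R$, the extension is integral and $\dim(\RR)_\q=\dim R_\p$ (equivalently $\dim R(I)_{a,b}=\dim R$, as recalled from \cite{BDS}); moreover the fibre $(\RR)_\q/\p(\RR)_\q$ is Artinian, so $\q(\RR)_\q=\sqrt{\p(\RR)_\q}$ and the depth of $(\RR)_\q$ computed over $(\RR)_\q$ coincides with its depth as an $R_\p$-module. In cases (i) and (ii) I would then use that $R_\p(I_\p)_{a,b}\cong R_\p\oplus I_\p$ as $R_\p$-modules, together with the additivity of depth on direct sums, to obtain $\depth(\RR)_\q=\min\{\depth R_\p,\depth I_\p\}$; in case (iii) one has simply $\depth(\RR)_{\p_i}=\depth R_\p$.

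With these computations the translation is purely formal. In cases (i) and (ii) the inequality $\depth(\RR)_\q\ge\min\{n,\dim(\RR)_\q\}$ becomes $\min\{\depth R_\p,\depth I_\p\}\ge\min\{n,\dim R_\p\}$, and since $\min\{x,y\}\ge m$ iff $x\ge m$ and $y\ge m$, this splits into $(S_n)$ for $R$ at $\p$ together with $\depth I_\p\ge\min\{n,\dim R_\p\}$. In case (iii) the inequality reduces to $(S_n)$ for $R$ at $\p$ alone, with no constraint on $I$, consistent with such primes being excluded from the statement. Assembling over all $\p$, the $R$-part occurs at every prime and yields exactly $(S_n)$ for $R$, whereas the $I$-part survives precisely at the primes of type (i) and (ii), i.e. those for which $t^2+at+b$ is irreducible in $Q(R/\p)[t]$ or $(\alpha_\p-\beta_\p)I\subseteq\p$. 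Because each step is an equivalence, both implications are obtained at once.

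I expect the main obstacle to be the depth bookkeeping rather than any single deep input: one must verify carefully that depth is insensitive to the change of rings $R_\p\to(\RR)_\q$, relying on the Artinian fibre so that $\p$-depth and $\q$-depth agree, and that the decomposition $R_\p(I_\p)_{a,b}\cong R_\p\oplus I_\p$ holds as $R_\p$-modules and not merely as rings. Both are standard, but they are the pivot on which the entire reduction rests; once they are established, the equivalence follows from the case analysis above with no further computation.
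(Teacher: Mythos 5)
Your proposal is correct and follows essentially the same route as the paper: organize the primes $\q$ of $\RR$ by their contraction $\p$, use Proposition \ref{localizations} to identify $(\RR)_{\q}$ with $R_\p(I_\p)_{a,b}$ or $R_\p$, note that dimension is preserved, and translate $(S_n)$ prime by prime via $\depth (\RR)_{\q}=\min\{\depth R_\p,\depth I_\p\}$ in the one-prime cases and $\depth (\RR)_{\q}=\depth R_\p$ in the split case. The only difference is that you supply the justification of the depth formula (the $R_\p$-module decomposition $R_\p\oplus I_\p$ together with the finite, Artinian-fibre change-of-rings argument), which the paper simply invokes as known.
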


\begin{proof}
Let $\mathfrak{q}$ be a prime ideal of $\RR$ and $\p = \mathfrak{q} \cap R$.
We distinguish two cases according to Proposition \ref{localizations}.
In both cases we notice that $\dim R_{\p}=\dim R_{\p}(I_{\p})_{a,b} = \dim (\RR)_{\mathfrak{q}}$.

\begin{itemize}
\item If $(\alpha_\p - \beta_\p)I \nsubseteq \p$, we have $\depth (\RR)_{\mathfrak q} = \depth R_p$; then, $\depth (\RR)_{\mathfrak q} \geq \min \{n, \dim (\RR)_{\mathfrak{q}} \}$ if and only if $\depth R_p    \geq \min \{n, \dim R_{\p}\}$.

\item If either $(\alpha_\p - \beta_\p)I \subseteq \p$ or $t^2+at+b$ is irreducible in $Q(R/\p)[t]$, it follows that $\depth (\RR)_{\mathfrak{q}}=\depth R_{\p}(I_{\p})_{a,b} = \min\{\depth R_{\p}, \depth I_{\p}\}$. Consequently, we get $\depth (\RR)_{\mathfrak{q}} \geq \min \{n, \dim (\RR)_{\mathfrak{q}}\}$
    if and only if  $\min\{\depth R_\p,\depth I_{\p}\} \geq \min \{n, \dim R_{\p} \}$. \qedhere
\end{itemize} 
\end{proof}

A noetherian ring $R$ satisfies the condition $(R_n)$ if $R_{\p}$ is regular for all $\p \in \spec R$ with $\height \p \leq n$.
Bagheri, Salimi, Tavasoli, and Yassemi ask when the condition $(R_n)$ holds for the amalgamated duplication, see \cite[Remark 3.9]{BSTY}. The next result gives the answer for the more general construction $\RR$.

\begin{prop} \label{Rn}
Let $R$ be a noetherian ring. Then, $\RR$ satisfies $(R_n)$ if and only if $R$ satisfies $(R_n)$ and $I_{\p}=0$ for all $\p \in \spec R$ with $\height \p \leq n$ and such that either $t^2+at+b$ is irreducible in $Q(R/\p)[t]$ or $(\alpha_\p - \beta_\p) I \subseteq \p$.
\end{prop}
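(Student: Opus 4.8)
The plan is to follow the pattern of the proof of Proposition \ref{Sn}, reducing the condition $(R_n)$ for $\RR$ to a regularity statement about the localizations described in Proposition \ref{localizations}. Since $R$ is noetherian, so is $\RR$, and $(R_n)$ is well posed. The first step is to record that, for every $\q \in \spec \RR$ lying over $\p = \q \cap R$, one has $\height \q = \height \p$. As $R \subseteq \RR$ is integral, Incomparability already gives $\height \q \leq \height \p$; for the reverse inequality I would invoke the explicit localizations, namely that $(\RR)_{\q}$ is isomorphic either to $R_{\p}(I_{\p})_{a,b}$ or to $R_{\p}$, and that in either case it has Krull dimension $\dim R_{\p}$, since the construction $R'(I')_{a,b}$ preserves Krull dimension (exactly the identity $\dim R_{\p}=\dim R_{\p}(I_{\p})_{a,b} = \dim (\RR)_{\q}$ used in Proposition \ref{Sn}). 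Consequently the contraction map $\q \mapsto \q \cap R$ identifies the primes of $\RR$ of height at most $n$ with the fibres over the primes $\p$ of $R$ of height at most $n$, so that $(R_n)$ for $\RR$ becomes the requirement that $(\RR)_{\q}$ be regular for every prime $\q$ lying over some $\p$ with $\height \p \leq n$.

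Next I would fix such a $\p$ and split according to Proposition \ref{localizations}. If $t^2+at+b$ is irreducible in $Q(R/\p)[t]$, or it is reducible with $(\alpha_\p - \beta_\p)I \subseteq \p$, then there is a single prime $\q$ over $\p$ and $(\RR)_{\q} \cong R_{\p}(I_{\p})_{a,b}$. If instead $t^2+at+b$ is reducible with $(\alpha_\p-\beta_\p)I \nsubseteq \p$, then the two primes $\p_1, \p_2$ over $\p$ satisfy $(\RR)_{\p_i} \cong R_{\p}$; here the technical hypothesis of Proposition \ref{localizations}(2)(b) is automatic, because throughout this section $t^2+at+b$ is assumed reducible already in $Q(R)[t]$, which allows the choice $p=0$ and hence $p\lambda I = 0$.

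The regularity criterion \cite[Remark 2.4]{BDS} then closes the argument: a local ring of the form $R'(I')_{a,b}$, with $R'$ local, is regular if and only if $R'$ is regular and $I' = 0$. Applying it with $R' = R_{\p}$ and $I' = I_{\p}$ (which is legitimate since $R_{\p}(I_{\p})_{a,b} \cong (\RR)_{\q}$ is local), I obtain in the first case that $(\RR)_{\q}$ is regular if and only if $R_{\p}$ is regular and $I_{\p} = 0$, whereas in the second case $(\RR)_{\p_i} \cong R_{\p}$ is regular if and only if $R_{\p}$ is regular, with no condition on $I_{\p}$. Ranging over all $\p$ of height at most $n$ then yields that $\RR$ satisfies $(R_n)$ precisely when $R_{\p}$ is regular for every such $\p$ (i.e. $R$ satisfies $(R_n)$) and, in addition, $I_{\p} = 0$ for every $\p$ of height at most $n$ falling in the first case, that is with $t^2+at+b$ irreducible in $Q(R/\p)[t]$ or $(\alpha_\p - \beta_\p)I \subseteq \p$. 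This is exactly the asserted equivalence.

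I expect the height equality $\height \q = \height \p$ to be the only genuinely delicate point, since for an arbitrary integral extension one gets only $\height \q \leq \height \p$; the explicit description of the localizations together with the dimension–invariance of the construction are what supply the missing inequality. A minor secondary point to verify is that $R_{\p}(I_{\p})_{a,b}$ is indeed local, so that the local regularity criterion applies: this holds because it is isomorphic to the local ring $(\RR)_{\q}$, equivalently because in the relevant case there is a unique prime lying over $\p$ and, in an integral extension, such a prime is maximal.
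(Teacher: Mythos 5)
Your proposal is correct and takes essentially the same approach as the paper's proof: a case split over the primes $\p = \q \cap R$ according to Proposition \ref{localizations} (using the section's standing assumption that $t^2+at+b$ is reducible in $Q(R)[t]$ to invoke part (2)(b)), followed by the regularity criterion of \cite[Remark 2.4]{BDS} in the first case. The only difference is that you explicitly verify the height equality $\height \q = \height \p$, a point the paper's proof passes over silently when it replaces ``$\height \q \leq n$'' by ``$\height \p \leq n$''; your justification of it via Incomparability and the dimension identity $\dim (\RR)_{\q} = \dim R_{\p}$ is correct.
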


\begin{proof}
Let $\mathfrak{q}$ be a prime ideal of $\RR$ such that
$\height \p \leq n$ and $\p= \mathfrak{q} \cap R$.
As in the previous proposition there are two cases:
\begin{itemize}
\item If $(\alpha_\p - \beta_\p)I \nsubseteq \p$, then $(\RR)_{\mathfrak{q}}$ is regular if and only if $R_{\p}$ is regular.
\item If either $(\alpha_\p - \beta_\p)I \subseteq \p$  or $t^2+at+b$ is irreducible in $Q(R/\p)[t]$, then $(\RR)_{\mathfrak{q}}$ is regular if and only if $R_{\p}(I_{\p})_{a,b}$ is regular and this is equivalent to $R_{\p}$ regular and $I_{\p}=0$, by \cite[Remark 2.4]{BDS}.
\end{itemize}
Putting together the two cases we get the thesis.
\end{proof}

If the polynomial $t^2+at+b$ is reducible in $R[t]$, as in the cases of idealization and amalgamated duplication, we can be more precise.

\begin{cor}
Let $I$ be a regular ideal of a noetherian ring $R$ and suppose that $t^2+at+b=(t-\alpha)(t-\beta)$ in $R[t]$. Then, $\RR$ satisfies $(R_n)$ if and only if $R$ satisfies $(R_n)$ and $n < \height (\alpha - \beta)I$.
\end{cor}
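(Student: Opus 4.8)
The plan is to deduce this directly from Proposition \ref{Rn}, reading the corollary as a reformulation of the criterion there under the two extra hypotheses that $t^2+at+b$ splits over $R$ and that $I$ is regular.

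First I would observe that, since $t^2+at+b=(t-\alpha)(t-\beta)$ already holds in $R[t]$, reducing the coefficients modulo any prime $\p$ gives $t^2+at+b=(t-\bar\alpha)(t-\bar\beta)$ in $(R/\p)[t]$, and hence in $Q(R/\p)[t]$. Thus the polynomial is reducible modulo every prime, so the alternative ``$t^2+at+b$ irreducible in $Q(R/\p)[t]$'' appearing in Proposition \ref{Rn} never occurs; moreover we may take $\bar\gamma_\p=1$, $\alpha_\p=\alpha$ and $\beta_\p=\beta$, so that the condition $(\alpha_\p-\beta_\p)I\subseteq\p$ becomes simply $(\alpha-\beta)I\subseteq\p$. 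Consequently Proposition \ref{Rn} reads: $\RR$ satisfies $(R_n)$ if and only if $R$ satisfies $(R_n)$ and $I_\p=0$ for every $\p\in\spec R$ with $\height\p\leq n$ and $(\alpha-\beta)I\subseteq\p$.

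Next I would use the regularity of $I$ to remove the vanishing condition on $I_\p$. The key point is that if $x\in I$ is a regular element, then its image in $R_\p$ is nonzero for every prime $\p$: indeed $x/1=0$ in $R_\p$ would yield $sx=0$ for some $s\notin\p$, forcing $s=0$ since $x$ is a non-zero-divisor, a contradiction. Hence $I_\p\neq 0$ for all $\p\in\spec R$, and the requirement ``$I_\p=0$ for all $\p$ with $\height\p\leq n$ and $(\alpha-\beta)I\subseteq\p$'' holds precisely when there is no such prime at all, that is, when no prime of height at most $n$ contains $(\alpha-\beta)I$.

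Finally I would translate this last condition into the height inequality: requiring that every prime $\p\supseteq(\alpha-\beta)I$ satisfy $\height\p>n$ is, since the minimal primes over an ideal realize its height, exactly the statement $\height(\alpha-\beta)I>n$, i.e.\ $n<\height(\alpha-\beta)I$. Combining the three steps gives the stated equivalence. I do not expect a serious obstacle; the only points needing care are the reduction of the factorization modulo $\p$ (to discard the irreducible branch and fix $\gamma=1$) and the elementary but essential fact that a regular ideal localizes to a nonzero ideal at every prime. This last point is precisely where the hypothesis that $I$ is regular is used, and it cannot be omitted.
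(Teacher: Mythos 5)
Your proof is correct and follows essentially the same route as the paper: both deduce the corollary from Proposition \ref{Rn}, use the regularity of $I$ to conclude that $I_\p \neq 0$ for every prime $\p$, and then identify the condition ``no prime of height at most $n$ contains $(\alpha-\beta)I$'' with $n < \height (\alpha-\beta)I$ via minimal primes. The only difference is presentational: the paper splits the argument into two implications (one by contradiction, choosing a minimal prime realizing $\height (\alpha-\beta)I$), whereas you run a single chain of equivalences, which is arguably cleaner.
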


\begin{proof}
If $\RR$ satisfies $(R_n)$ it follows from the previous proposition that $R$ satisfies $(R_n)$. Suppose by contradiction that $n \geq \height (\alpha - \beta)I$ and let $\p$ be a minimal prime of $(\alpha - \beta)I$ such that $\height(\p)=\height((\alpha-\beta)I)$. By the previous proposition we have $I_{\p}=0$, but, if $x \in I$ is regular, this implies that $xs=0$ for some $s \in R \setminus \p$, a contradiction.

Conversely, we have $(\alpha - \beta)I \nsubseteq \p$ for any prime ideal $\p$ of $R$ of height less than or equal to $n$; hence the thesis follows from the previous proposition.
\end{proof}

The previous corollary implies that, if $I$ is regular and $\height (\alpha - \beta)I \leq n$, the rings $\RR$ never satisfies condition $(R_n)$. This is the case of idealization, since $\alpha = \beta=0$. As for amalgamated duplication the factorization is $t(t-1)$, hence $R(I)_{-1,0}$ satisfies the property $(R_n)$ if and only if $R$ satisfies ${\rm R}_n$ and $n<\height(I)$.

\section{The case $t^2+at+b$ reducible in $R[t]$}

In this section we always assume that $t^2+at+b=(t - \alpha)(t- \beta)$, where
$\alpha$ and $\beta$ are elements of $R$. Particular cases of this setting are 
both idealization and duplication, since the corresponding polynomials are $t^2$ and, respectively, $t(t-1)$.
Thus, we get a subfamily of the family of rings $\RR$; the interest in studying this subfamily comes from the facts that it is large enough (as we will see, we can obtain elements that are not isomorphic neither to an idealizazion nor to a duplication) and, for any ring $T$ in it, $R$ is naturally a $T$ module (cf. Remark \ref{hom}).

We recall that, if $R$ is reduced, the amalgamated duplication is always reduced, while the idealization is never reduced; in particular, in these two constructions this property doesn't depend on the ideal $I$.
Despite this, in the next example we show that could be some choices of $a$ and $b$ for which $R(I)_{a,b}$ can be reduced or not depending on the ideal $I$.

\begin{ex} \label{esempio 1} \rm
Let $k$ be a field of characteristic two and set
$R:=k[X,Y]/(XY)$, that is a reduced ring.
Denote the images of $X,Y$ in $R$ by $x,y$ and
consider $R(I)_{x,y^2}=R[It]/I^2(t^2+xt+y^2)$.
Notice that $(t^2+xt+y^2)=(t+(y+x))(t+y)$, since ${\rm char} \ \! R=2$.
If $I=(y)$, then $(y)^2(t^2+xt+y^2)=(y^2)(t^2+y^2)$ in $R[t]$, so $R(I)_{x,y^2} \cong R \ltimes I$
by \cite[Proposition 1.5]{BDS} and, in particular, it is not a reduced ring. \\
On the other hand, if $I=(x)$, then $(x)^2(t^2+xt+y^2)=(x^2)(t^2+xt)$ in $R[t]$.
If $r+it$ is a nilpotent element of $R(I)_{x,y^2}$, it follows that
$0=(r+it)^n=r^n+t( \dots )$ and thus $r=0$, since $R$ is reduced. Therefore, if $i=\lambda x$ for some $\lambda \in R$, we get
$0=(it)^n=(\lambda x)^n t^n=\lambda ^n x^{2n-1} t$ and this implies $Y|\lambda$ in $k[X,Y]$, that is
$i=\lambda_1 xy=0$ in $R$. This proves that $R(I)_{x,y^2}$ is reduced.
\end{ex}

In Corollary \ref{idealization and duplication} we will see that the last ring of the previous example is an amalgamated duplication. However, in Example \ref{esempio 2} we will produce
a ring of our subfamily that is not isomorphic neither to an idealization nor to an amalgamated duplication, proving that there are also new rings in the family we are studying.

\begin{rem} \label{hom} \rm
We note that in our case there exists a ring homomorphism
$$
\frac{R[t]}{((t- \alpha)(t- \beta))} \longrightarrow \frac{R[t]}{(t-\alpha)} \cong R,
$$
If we restrict to $R(I)_{a,b}$, we get a ring homomorphism $R(I)_{a,b} \rightarrow R$, that maps $s+jt$ to $s+j\alpha$; 
since there also exists a natural homomorphism $R \rightarrow R(I)_{a,b}$, 
any $R$-module $M$ is an $R(I)_{a,b}$-module and vice versa; moreover 
$\lambda_{R}(M)=\lambda_{R(I)_{a,b}}(M)$, where $\lambda$ denotes the length of a module.

In particular,  $R$ is an $\RR$-module with the scalar multiplication $(s+jt)r=sr+j\alpha r$, where $s+jt \in \RR$ and $r \in R$.
\end{rem}

We denote the minimal primes of $R$ by ${\rm Min}(R)$ and their intersection, the nilradical of $R$, by $N(R)$. Moreover, we write $\Ann(x)$ and $\Ann(I)$ to denote the annihilator of the element $x$ and the ideal $I$ respectively. 

\begin{prop} \label{reduced}
$\RR$ is reduced if and only if $R$ is reduced and $I \cap \Ann(\alpha - \beta)=(0)$.
\end{prop}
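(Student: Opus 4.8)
The plan is to prove both implications using the two ring homomorphisms $R(I)_{a,b} \to R$ coming from the two linear factors of the defining polynomial, together with the $R$-module decomposition $R(I)_{a,b} = R \oplus I$ (so that every element is written as $r + it$ with $r \in R$, $i \in I$). Remark \ref{hom} already provides the homomorphism $\phi_\alpha \colon s+jt \mapsto s+j\alpha$ obtained from $R[t]/((t-\alpha)(t-\beta)) \to R[t]/(t-\alpha) \cong R$; applying the same construction to the symmetric factor $t-\beta$ yields a second homomorphism $\phi_\beta \colon s+jt \mapsto s+j\beta$. These two maps detect nilpotents very efficiently.

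For the \emph{if} direction, assume $R$ is reduced and $I \cap \Ann(\alpha-\beta)=(0)$, and let $z=r+it$ be a nilpotent element of $R(I)_{a,b}$. Since $\phi_\alpha$ and $\phi_\beta$ are ring homomorphisms, $\phi_\alpha(z)=r+i\alpha$ and $\phi_\beta(z)=r+i\beta$ are nilpotent in $R$, hence both zero because $R$ is reduced. Subtracting the two relations gives $i(\alpha-\beta)=0$, so $i \in I \cap \Ann(\alpha-\beta)=(0)$, i.e. $i=0$; then $r=-i\alpha=0$, and therefore $z=0$. This shows that $R(I)_{a,b}$ is reduced.

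For the \emph{only if} direction, one half is immediate: $R$ is a subring of $R(I)_{a,b}$, and a subring of a reduced ring is reduced. For the annihilator condition I would argue by contraposition: given $0 \neq x \in I \cap \Ann(\alpha-\beta)$, I would exhibit the element $x(t-\alpha)=-\alpha x + xt$ and check that it squares to zero. Using the multiplication rule $(r+it)(s+jt)=rs-ijb+(rj+si-aij)t$ with $b=\alpha\beta$ and $a=-(\alpha+\beta)$, both components of $(x(t-\alpha))^2$ turn out to be multiples of $x^2(\alpha-\beta)=x\cdot\bigl(x(\alpha-\beta)\bigr)=0$, so $(x(t-\alpha))^2=0$. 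Since the $t$-component of $x(t-\alpha)$ is $x\neq 0$, the decomposition $R(I)_{a,b}=R\oplus I$ guarantees $x(t-\alpha)\neq 0$; thus $R(I)_{a,b}$ has a nonzero nilpotent and is not reduced.

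The routine computations are the two component identities in the squaring of $x(t-\alpha)$ and the verification that $\phi_\beta$ is well defined, neither of which is delicate. The only point requiring care — and the one I would state explicitly — is the use of the $R$-module structure $R(I)_{a,b}=R\oplus I$: it is what makes the representation $z=r+it$ meaningful (so that $r=i=0$ really forces $z=0$ in the \emph{if} part) and what certifies that the candidate nilpotent $x(t-\alpha)$ is genuinely nonzero in the \emph{only if} part. I expect this bookkeeping, rather than any hard estimate, to be the main thing to get right.
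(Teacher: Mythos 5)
Your proof is correct, and its two halves compare differently with the paper's. The ``only if'' half is essentially the paper's argument: the paper squares the element $-\beta i + it$ where you square $-\alpha x + xt$, a symmetric choice, and both use the decomposition $\RR = R \oplus It$ to see the resulting nilpotent is nonzero. Your ``if'' half, however, takes a genuinely different and more elementary route. The paper deduces it from its spectral description: it invokes Lemma~\ref{ideals} and Proposition~\ref{minimal primes} to write ${\rm Min}(\RR)=\bigcup_{\p \in {\rm Min}(R)}\{\p_1,\p_2\}$, takes $r+it$ in the nilradical, extracts $r+\alpha i,\ r+\beta i \in \p$ for every minimal prime $\p$ of $R$, and concludes $i(\alpha-\beta)\in \bigcap_{\p \in {\rm Min}(R)} \p = (0)$. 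You instead push the nilpotent through the two retractions $\phi_\alpha,\phi_\beta \colon \RR \to R$, $s+jt \mapsto s+j\alpha$ (resp.\ $s+j\beta$), obtaining $r+i\alpha = r+i\beta = 0$ at once because $R$ is reduced; the same subtraction then finishes. Your route needs only Remark~\ref{hom} (plus its evident twin for the factor $t-\beta$, which is well defined for the same reason) and avoids entirely the lying-over/incomparability machinery behind Proposition~\ref{minimal primes}; this makes the proposition self-contained within the section's standing hypothesis that $t^2+at+b$ factors in $R[t]$. What the paper's route buys is economy within its own architecture -- it recycles the description of $\spec \RR$ it has already established -- and it makes visible where reducedness fails, namely at primes where $\p_1=\p_2$. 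Underneath, both arguments hinge on the identical pair of linear relations, so the difference is in the machinery invoked, not in the computation.
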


\begin{proof}
The set of minimal primes of $\RR$ is $A=\bigcup_{\p \in {\rm Min}(R)} \{ \p_1,\p_2 \}$,
therefore, $\RR$ is reduced if and only if $N(\RR)=\bigcap A = (0)$.

Assume that $R$ is reduced and $I \cap \Ann(\alpha - \beta)=(0)$. Let $r+it$ be an element of $N(\RR)$ and
fix $\p \in {\rm Min}(R)$. Since $r+it \in \p_1 \cap \p_2$, then $r+\alpha i$, $r+\beta i \in \p$ and consequently $i(\alpha - \beta) \in \p$. This holds for any $\p \in {\rm Min}(R)$ and, thus,
$i(\alpha - \beta) \in \bigcap\limits_{\p \in {\rm Min}(R)} \p = (0)$. This implies that $i=0$,
since $I \cap \Ann(\alpha - \beta)=(0)$,
and then also $r=0$, since $R$ is reduced.  

Conversely it is clear that $R$ is reduced if $\RR$ is.
Moreover, if $i \in I \cap \Ann(\alpha - \beta)$, then
$$
(- \beta i + it)^2 = \beta ^2 i^2 - b i^2 +(-2 \beta i^2 - ai^2)t =
(\beta - \alpha) i^2 \beta + (\alpha - \beta) i^2 t = 0,
$$
hence, $-\beta i +it=0$ and consequently $i=0$.
\end{proof}

\begin{cor}
Let $R$ be a reduced ring and assume that $\alpha - \beta$ is regular, then $\RR$ is reduced. Moreover, the converse holds if $I$ is regular.
In particular, if $R$ is an integral domain and $\alpha \neq \beta$, then $\RR$ is reduced.
\end{cor}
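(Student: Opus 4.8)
The plan is to read off all three assertions directly from Proposition \ref{reduced}, which says that $\RR$ is reduced precisely when $R$ is reduced and $I \cap \Ann(\alpha-\beta)=(0)$. So the whole corollary reduces to relating the regularity hypotheses on $\alpha-\beta$ (and on $I$) to this intersection condition.

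For the first implication, I would observe that if $\alpha-\beta$ is regular (a non-zero-divisor), then $\Ann(\alpha-\beta)=(0)$, and hence $I \cap \Ann(\alpha-\beta) \subseteq \Ann(\alpha-\beta)=(0)$ trivially. Combined with the hypothesis that $R$ is reduced, Proposition \ref{reduced} immediately yields that $\RR$ is reduced. No computation is needed here.

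For the converse, assuming $I$ regular and $\RR$ reduced, Proposition \ref{reduced} already gives that $R$ is reduced and that $I \cap \Ann(\alpha-\beta)=(0)$; it remains only to promote the latter to the statement that $\alpha-\beta$ is regular. I would argue by contradiction: suppose there is some $c \neq 0$ with $c(\alpha-\beta)=0$, i.e.\ $c \in \Ann(\alpha-\beta)$. Since $I$ is regular, choose a regular element $x \in I$ and form the product $cx$. Then $cx \in I$ because $I$ is an ideal, and $cx(\alpha-\beta)=x\bigl(c(\alpha-\beta)\bigr)=0$, so $cx \in \Ann(\alpha-\beta)$; moreover $cx \neq 0$, since $x$ is a non-zero-divisor and $c \neq 0$. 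Thus $cx$ is a nonzero element of $I \cap \Ann(\alpha-\beta)$, contradicting $I \cap \Ann(\alpha-\beta)=(0)$. Hence $\alpha-\beta$ must be regular.

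Finally, for the last sentence, I would note that in an integral domain every nonzero element is a non-zero-divisor and the ring is reduced; therefore $\alpha \neq \beta$ forces $\alpha-\beta$ regular, and the first implication applies to give that $\RR$ is reduced. The only step with any content is the converse, where the mild point is that a general annihilator of $\alpha-\beta$ need not lie in $I$, so one uses the regularity of $I$ to transport it inside $I$ without killing it; everything else is a direct appeal to Proposition \ref{reduced}.
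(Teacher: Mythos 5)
Your proposal is correct and matches the paper's own proof essentially step for step: the forward direction and the final assertion are read off directly from Proposition \ref{reduced}, and the converse is exactly the paper's contradiction argument, multiplying a hypothetical nonzero annihilator of $\alpha-\beta$ by a regular element of $I$ to produce a nonzero element of $I \cap \Ann(\alpha-\beta)$. Your write-up is slightly more careful than the paper's (you note explicitly why $cx \neq 0$ and why the annihilator element must be transported into $I$), but the underlying argument is identical.
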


\begin{proof}
The first part follows immediately from the previous proposition. Conversely, if by contradiction there exists $x \in R$ such that $x(\alpha-\beta)=0$ and $i \in I$ is regular, then $0 \neq xi \in I \cap \Ann(\alpha-\beta)$ and the previous proposition leads to a contradiction.
\end{proof}

\begin{rem} \rm
We note that, if $t^2+at+b$ is irreducible in $Q(R/\p)[t]$ for any $\p \in {\rm Min}(R)$, then $R$ is reduced if and only if $\RR$ is reduced. In fact, if $R$ is reduced it is enough to compute the nilradical of $\RR$:
$$
N(\RR)= \bigcap\limits_{\mathfrak q \in {\rm Min}(\RR)} \mathfrak q
= \bigcap\limits_{\p \in {\rm Min}(R)} \{p+it \ | \ p \in \p, i \in I \cap \p \}=
$$
$$
= \{p+it \ | \ p \in N(R), i \in I \cap N(R) \} = (0)\RR.
$$
\end{rem}

\subsection{Idealization and amalgamated duplication}

We have already noted that the idealization $R \ltimes I$ and the amalgamated duplication $\RI$ are members of our family;
in this subsection we study when $R(I)_{a,b}$ is isomorphic to them.
As consequence, we will show that it is possible to find rings in our subfamily that are not isomorphic neither to an idealization nor to an amalgamated duplication (cf. Example \ref{esempio 2}).

\begin{prop}
The following statements hold. \\
$1)$ $\RR \cong R \ltimes I$ if $\alpha - \beta \in \Ann(I^2)$. \\
$2)$ $\RR \cong R \Join (\alpha-\beta)I$ if $\Ann(\alpha - \beta) \cap I=(0)$.
\end{prop}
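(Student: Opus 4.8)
The plan is to construct an explicit ring isomorphism in each case, using the concrete description of $\RR$ as the set $\{r+it \mid r \in R,\ i \in I\}$ with multiplication
\[
(r+it)(s+jt) = (rs - bij) + (rj + si - aij)\,t,
\]
obtained by reducing $t^2 \equiv -at-b = (\alpha+\beta)t - \alpha\beta$, where $a=-(\alpha+\beta)$ and $b=\alpha\beta$. The two factors $t-\alpha$ and $t-\beta$ supply two evaluation homomorphisms $\RR \to R$, namely $r+it \mapsto r+\alpha i$ and $r+it \mapsto r+\beta i$ (the first is the map of Remark \ref{hom}, and the second is obtained by symmetry); these are the natural building blocks for both isomorphisms.

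For part $1)$, I would test the map $\phi \colon \RR \to R \ltimes I$ given by $\phi(r+it) = (r+\alpha i,\, i)$. Additivity, preservation of the identity, and bijectivity are immediate, the inverse being $(s,i) \mapsto (s-\alpha i)+it$. The only real content is multiplicativity: when one compares $\phi\big((r+it)(s+jt)\big)$ with $\phi(r+it)\,\phi(s+jt)$, the first components agree identically (both equal $(r+\alpha i)(s+\alpha j)$), while the second components differ by exactly $(\alpha-\beta)\,ij$. Since $ij \in I^2$, this discrepancy vanishes precisely because $\alpha-\beta \in \Ann(I^2)$, which is the hypothesis. Hence $\phi$ is the desired isomorphism.

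For part $2)$, I would take the product of the two evaluation homomorphisms, $\psi \colon \RR \to R \times R$, $\psi(r+it) = (r+\alpha i,\, r+\beta i)$. Being a product of ring homomorphisms, $\psi$ is automatically a ring homomorphism, and since its two coordinates differ by $(\alpha-\beta)i \in (\alpha-\beta)I$, its image lies in $R \Join (\alpha-\beta)I$. Surjectivity is a direct parametrization: a general element $\big(r',\, r'+(\alpha-\beta)i\big)$ is the image of $(r'+\alpha i)+(-i)t$. The hypothesis enters only in injectivity: if $\psi(r+it)=0$, then subtracting coordinates gives $(\alpha-\beta)i = 0$, so $i \in \Ann(\alpha-\beta)\cap I = (0)$, whence $i=0$ and then $r=0$.

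I do not anticipate a serious obstacle; the argument amounts to verifying that two natural candidate maps work. The point demanding the most care is the multiplicativity computation in part $1)$, since that is exactly where the algebraic identity forces $(\alpha-\beta)I^2=0$ and where a sign or coefficient slip would be easy. Correspondingly, in part $2)$ one must be careful to identify the target as $R\Join(\alpha-\beta)I$ rather than $R\Join I$, and to handle the elements of this ring correctly even though, without the injectivity hypothesis, the product $(\alpha-\beta)i$ need not determine $i$ uniquely.
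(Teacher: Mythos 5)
Your proposal is correct, but it takes a genuinely different route from the paper's proof. The paper first applies the translation automorphism $t \mapsto t+\alpha$ of $R[t]$ to identify $\RR$ with $R(I)_{\alpha-\beta,0}$; then part $1)$ becomes the observation that the hypothesis $\alpha-\beta \in \Ann(I^2)$ makes the defining ideals $I^2(t^2+(\alpha-\beta)t)$ and $I^2(t^2)$ literally equal, and part $2)$ is handled by the homomorphism $r+it \mapsto r+(\alpha-\beta)it$ onto $R((\alpha-\beta)I)_{-1,0}$, which is injective exactly because $\Ann(\alpha-\beta)\cap I=(0)$; in both cases the final identification with $R\ltimes I$ and $R \Join (\alpha-\beta)I$ is delegated to \cite[Proposition 1.4]{BDS}. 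You instead map directly onto the standard models: in $1)$ the map $r+it \mapsto (r+\alpha i,\, i)$ into $R\ltimes I$, whose only failure of multiplicativity is the term $(\beta-\alpha)ij$ (your claimed discrepancy $(\alpha-\beta)ij$ is off by a sign, which is immaterial since both die on $I^2$ under the hypothesis), and in $2)$ the pair of evaluation maps $r+it \mapsto (r+\alpha i,\, r+\beta i)$ into $R\times R$, whose image is the duplication realized as $\{(r,r+j) \mid r\in R,\ j\in (\alpha-\beta)I\} \subseteq R\times R$, with injectivity again exactly the hypothesis. Your route is self-contained, needing only the multiplication rule in $\RR$ and the standard presentations of idealization and duplication, and in part $2)$ it is conceptually illuminating: the two factors $t-\alpha$ and $t-\beta$ furnish two evaluations whose product embeds $\RR$ in $R\times R$ with image the duplication. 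What the paper's route buys is brevity and a stronger structural fact in part $1)$: under that hypothesis the two rings are the same quotient of $R[It]$ (equal defining ideals), not merely isomorphic rings.
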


\begin{proof}
We can consider the ring automorphism of $R[t]$ given by $t \mapsto t+ \alpha$, then
$$
\RR = \frac{R[It]}{I^2((t-\alpha)(t-\beta))} \cong
\frac{R[It]}{I^2(t^2+(\alpha - \beta)t)} = R(I)_{\alpha - \beta,0}.
$$
$1)$ If $\alpha - \beta \in \Ann(I^2)$, then
$$
\RR \cong \frac{R[It]}{I^2(t^2+(\alpha -\beta)t)}
=\frac{R[It]}{I^2(t^2)} \cong R \ltimes I
$$
by \cite[Proposition 1.4]{BDS}. \\
$2)$ Consider the map
$\varphi: R(I)_{\alpha-\beta,0} \rightarrow R((\alpha-\beta)I)_{-1,0}$
given by $\varphi (r+it)=r + (\alpha - \beta)it$. This is a ring homomorphism, since
$$
\varphi((r+it)(s+jt))\! = \! \varphi(rs+(rj+si+ij(\alpha - \beta))t) \! = \! rs + (\alpha - \beta)(rj + si +ij(\alpha - \beta))t
$$
$$
\varphi(r+it)\varphi(s+jt) \! = \! (r+(\alpha - \beta)it)(s + (\alpha - \beta)jt) \! = \!
rs + (\alpha - \beta)(rj + si +ij(\alpha - \beta))t.
$$
Moreover, $\varphi$ is clearly surjective and, since
$\Ann(\alpha - \beta) \cap I=(0)$, it is injective as well;
hence, $\varphi$ is an isomorphism and the thesis follows, because
$R((\alpha-\beta)I)_{-1,0} \cong R \Join (\alpha-\beta)I$ by \cite[Proposition 1.4]{BDS}.

\end{proof}

\begin{cor} \label{idealization and duplication}
Let $R$ be a reduced ring.
The following statements hold. \\
$1)$ $\RR \cong R \ltimes I$ if and only if $\alpha - \beta \in \Ann(I)$. \\
$2)$ $\RR \cong R \Join (\alpha - \beta)I$ if and only if $\Ann(\alpha - \beta) \cap I=(0)$.
\end{cor}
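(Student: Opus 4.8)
The plan is to prove the four half-implications separately; three of them are essentially formal and the fourth carries all the content.

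First I would dispose of part $2)$. For the ``if'' direction there is nothing to add to the preceding proposition, whose part $2)$ produces the isomorphism $\RR\cong R\Join(\alpha-\beta)I$ under exactly the hypothesis $\Ann(\alpha-\beta)\cap I=(0)$ (and without using that $R$ is reduced). For the converse I would use that an amalgamated duplication of a reduced ring is reduced: if $(r,r+w)$ is nilpotent in $R\Join(\alpha-\beta)I$ then $r$ and $r+w$ are nilpotent in $R$, hence both zero, so $w=0$. Thus $\RR\cong R\Join(\alpha-\beta)I$ forces $\RR$ reduced, and Proposition \ref{reduced} then gives $I\cap\Ann(\alpha-\beta)=(0)$, as desired.

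Part $1)$ ``if'' is again formal: since $xI=0$ implies $xI^2=0$ we have $\Ann(I)\subseteq\Ann(I^2)$, so $\alpha-\beta\in\Ann(I)$ yields $\alpha-\beta\in\Ann(I^2)$ and the preceding proposition applies. It is worth noting why the hypotheses match in the reduced case, which explains the sharper form of the statement: if $c:=\alpha-\beta\in\Ann(I^2)$ then $ci^2=0$ for every $i\in I$, hence $(ci)^2=c(ci^2)=0$ and $ci=0$ because $R$ is reduced; so in fact $\Ann(I)=\Ann(I^2)$ whenever $R$ is reduced.

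The substance is part $1)$ ``only if'': assuming $R$ reduced and $\RR\cong R\ltimes I$, I must derive $(\alpha-\beta)I=(0)$. My approach is to count minimal primes on both sides. The idealization $R\ltimes I$ carries the nilpotent ideal $0\oplus I$ with quotient $R$, so the contraction map ${\rm Min}(R\ltimes I)\to{\rm Min}(R)$ is a bijection; moreover $R\ltimes I$ is not reduced, since $I\neq(0)$. On the other side, the description of minimal primes established inside the proof of Proposition \ref{reduced} gives ${\rm Min}(\RR)=\bigcup_{\p\in{\rm Min}(R)}\{\p_1,\p_2\}$, where by Lemma \ref{ideals} one has $\p_1=\p_2$ precisely when $(\alpha-\beta)I\subseteq\p$. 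Hence an isomorphism $\RR\cong R\ltimes I$ forces $(\alpha-\beta)I\subseteq\p$ for every minimal prime $\p$ of $R$, so that $(\alpha-\beta)I\subseteq\bigcap_{\p\in{\rm Min}(R)}\p=(0)$ because $R$ is reduced, which is exactly $\alpha-\beta\in\Ann(I)$.

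The main obstacle is this last comparison. An abstract ring isomorphism $\RR\cong R\ltimes I$ need not respect the embeddings of $R$, so one cannot simply localize over $R$ and invoke the local regularity criterion \cite[Remark 2.4]{BDS}; the minimal-prime count is the device that circumvents the lack of $R$-linearity. The delicate point is converting ``equal number of minimal primes'' into ``no doubled fibre'': the contraction map ${\rm Min}(\RR)\to{\rm Min}(R)$ is surjective and admits a fibre of cardinality two as soon as $(\alpha-\beta)I\neq(0)$, whereas ${\rm Min}(R\ltimes I)\to{\rm Min}(R)$ is a bijection, so the two spectra cannot be homeomorphic unless $(\alpha-\beta)I=(0)$. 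This is immediate when ${\rm Min}(R)$ is finite (in particular when $R$ is noetherian), and I would either invoke that finiteness or argue directly with the fibre of size two produced by a minimal prime $\p$ with $(\alpha-\beta)I\nsubseteq\p$.
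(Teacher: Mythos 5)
Your handling of part 2) and of the ``if'' half of part 1) coincides with the paper's: both are quoted from the preceding proposition (for 1) via $\Ann(I)\subseteq\Ann(I^2)$; your aside that $\Ann(I)=\Ann(I^2)$ over a reduced ring is correct and explains why the corollary can be stated with $\Ann(I)$), and the converse of 2) follows, exactly as in the paper, from reducedness of the amalgamated duplication of a reduced ring together with Proposition \ref{reduced}.

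The divergence, and the only genuine soft spot, is the converse of part 1). The paper argues fibrewise over \emph{every} prime of $R$: reading the isomorphism $\RR\cong R\ltimes I$ as compatible with the two $R$-algebra structures, lying over is preserved; since $0\ltimes I$ is a square-zero ideal with $(R\ltimes I)/(0\ltimes I)\cong R$, exactly one prime of $R\ltimes I$, hence of $\RR$, lies over each $\p\in\spec R$, so Lemma \ref{ideals} forces $(\alpha-\beta)I\subseteq\p$ for all primes $\p$ and therefore $(\alpha-\beta)I\subseteq\bigcap_{\p}\p=(0)$. You instead compare cardinalities of the sets of minimal primes, precisely because you want to allow an abstract ring isomorphism. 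As you yourself flag, that counting argument is only conclusive when ${\rm Min}(R)$ is finite: if ${\rm Min}(R)$ is infinite, a doubled fibre does not change the cardinality of ${\rm Min}(\RR)$, and your proposed fallback of ``arguing directly with the fibre of size two'' is not available for an abstract isomorphism, since the map $\spec(\RR)\to\spec R$ is exactly the datum such an isomorphism fails to preserve. So your proof as written is complete only under a finiteness hypothesis (e.g. $R$ noetherian) that is absent from the statement. The repair is to do what the paper implicitly does: take the isomorphism to be one of $R$-algebras (this is also what legitimizes the paper's own one-line deduction), after which fibres over every individual prime are preserved, your minimal-prime version closes with no counting whatsoever, and no finiteness is needed. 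In short, the extra generality you aimed for is the sole source of the gap; the paper does not attempt it.
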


\begin{proof}
$1)$ If $\alpha - \beta \in \Ann(I)$, then $\RR \cong R \ltimes I$ by previous proposition.
Conversely, suppose that $\RR \cong R \ltimes I$.
Then, only one prime ideal of $\RR$ lies over a prime ideal of $R$, and this happens if and only if
$(\alpha - \beta)I \subseteq \bigcap\limits_{\p {\rm \ prime}} \p = (0)$, because $R$ is reduced; hence $\alpha - \beta \in \Ann(I)$. \\
$2)$ We need to prove that, if $\RR \cong R \Join (\alpha - \beta)I$, then $\Ann(\alpha - \beta) \cap I=(0)$. If this does not happen, $\RR$ is not reduced by Proposition \ref{reduced} and this yields a contradiction, since the amalgamated duplication is reduced if $R$ is.
\end{proof}

The next example shows that in the subfamily studied in this section there are rings that are not isomorphic neither to an idealization nor to an amalgamated duplication.

\begin{ex} \label{esempio 2} \rm
Consider $R=k[X,Y]/(XY)$ and $I=(x,y)$, where $k$ is a field with ${\rm char} \ \! k \neq 2$ and $x,y$ denote the images of $X,Y$ in $R$.
Set $\alpha = y-r$ and $\beta = -y -r$ with $r \in R$, thus,
$(t-\alpha)(t-\beta)=t^2+2rt+r^2-y^2$.
We have $\alpha - \beta=2y \notin \Ann(I)$ and
$x \in \Ann(\alpha - \beta) \cap I$, then $R(I)_{2r,r^2-y^2}$ has two different minimal prime ideals by Lemma \ref{ideals} and Proposition \ref{minimal primes}; consequently, it cannot be isomorphic to an idealization.
Moreover, since $\Ann(\alpha - \beta) \cap I \neq (0)$, Proposition \ref{reduced} implies that $R(I)_{2r,r^2-y^2}$ is not reduced and, therefore, it is not isomorphic to an amalgamated duplication.
\end{ex}

If $R$ is not reduced, the first part of Corollary \ref{idealization and duplication} does not hold, as it is shown in the next example.

\begin{ex} \rm
Consider $R=\Z/2^n \Z$. The non units of $R$ are the classes represented by $2^\alpha m$ with $\alpha \geq 1$ and $m$ odd. Then, the square of any ideal is annihilated by $2^{n-2}$. This means that for any ideal $I$ one has
$R[It]/I^2 (t^2+2^{n-2}t) = R[It]/I^2 (t^2) \cong R \ltimes I$.
Moreover, if we choose $I=(2)$ we have $2^{n-2} \notin \Ann(I)$.
\end{ex}

\subsection{Quasi-Gorenstein rings}

Let $(R, \m)$ be a $d$-dimensional local ring. A finitely generated $R$-module $\omega_R$ is said to be a canonical module of $R$ if $\omega_R \otimes_R \widehat{R} \cong \Hom_R(H^d_{\m}(R), E(R/\m))$, where $H^d_{\m}(R)$ denotes the $d$-th local cohomology module of $R$ with support in $\m$ and $E(R/\m)$ is the injective hull of $R/\m$.
If the canonical module $\omega_R$ exists, it is unique up to isomorphism. In this case the ring $R$ is said to be quasi-Gorenstein if its canonical module is a rank one free $R$-module, see \cite{TT} and references therein for other characterizations and properties of quasi-Gorenstein rings. 
In \cite{A2}, Aoyama characterizes when idealization is quasi-Gorenstein, while Bagheri, Salimi, Tavasoli and Yassemi do the same for amalgamated duplication in \cite{BSTY}. In this subsection we generalize these results to all the rings of the family $R(I)_{a,b}$ for which $t^2+at+b$ is reducible in $R[t]$.
We start by recalling a lemma that we will use in Theorem \ref{quasi Gorenstein}.

\begin{lem} \label{Completion} {\cite[Remark 1.7]{BDS2}}
Let $R$ be a noetherian local ring.
Then $\widehat{\RR} \cong \widehat{R}(\widehat{I})_{a,b}$ as $\widehat{R}$-algebra.
\end{lem}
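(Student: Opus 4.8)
The plan is to show that forming $\RR$ commutes with the flat base change $R \to \widehat R$, and then to identify this base change with the completion by exploiting that $\RR$ is module-finite over $R$. Recall from the computation in the paper that, as an $R$-module, $\RR = R \oplus It$, which is finitely generated because $R$ is noetherian. Hence the $\m$-adic completion of $\RR$ agrees with $\RR \otimes_R \widehat R$ (a standard property of the completion of a finite module over a noetherian ring; see \cite{E}). One first checks that this $\m$-adic completion is the completion intended in the statement: since $\RR$ is integral over $R$, it is semilocal, and its Jacobson radical $J$ satisfies $\m\RR \subseteq J$ and $J^N \subseteq \m\RR$ for some $N$ (because $\RR/\m\RR$ is artinian), so the $J$-adic and $\m$-adic topologies coincide. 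It therefore suffices to establish the $\widehat R$-algebra isomorphism $\RR \otimes_R \widehat R \cong \widehat R(\widehat I)_{a,b}$.

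To prove this I would compute the base change on the Rees-algebra presentation. Since $\widehat R$ is flat over $R$, tensoring commutes with the direct sum and with powers of $I$: one has $I^n \widehat R = (I\widehat R)^n = \widehat I^{\,n}$, so
\[
\R \otimes_R \widehat R = \Big(\bigoplus_{n \ge 0} I^n t^n\Big) \otimes_R \widehat R = \bigoplus_{n \ge 0}\widehat I^{\,n} t^n = \widehat R[\widehat I\, t].
\]
Moreover flatness turns the defining exact sequence of $R$-modules $0 \to (I^2(t^2+at+b)) \to \R \to \RR \to 0$ into the analogous sequence over $\widehat R$, in which the extended ideal is exactly $(\widehat I^{\,2}(t^2+at+b))$: the images of the generators $c(t^2+at+b)$, $c \in I^2$, generate it over $\widehat R$, since $I^2$ generates $\widehat I^{\,2}$. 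Therefore
\[
\RR \otimes_R \widehat R = \frac{\widehat R[\widehat I\, t]}{(\widehat I^{\,2}(t^2+at+b))} = \widehat R(\widehat I)_{a,b},
\]
and the resulting map is $\widehat R$-linear and multiplicative by construction, hence an isomorphism of $\widehat R$-algebras. (Equivalently one may argue on the decomposition $\RR = R \oplus It$: completing gives $\widehat R \oplus \widehat I\, t$, and the structure constants of the multiplication $(r+it)(s+jt) = (rs-bij)+(rj+si-aij)t$ involve only $a,b \in R$, whose images in $\widehat R$ give the same product rule.)

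The routine parts (the flatness identities for powers of $I$ and the description of the extended defining ideal) are straightforward. The only genuine point requiring care is the first step, namely identifying the completion in the statement with the base change $-\otimes_R \widehat R$; this rests on the module-finiteness of $\RR$ together with the coincidence of the $\m$-adic and $J$-adic topologies on the semilocal ring $\RR$. I expect the comparison of topologies to be the main obstacle, together with keeping track of the possibly two maximal ideals of $\RR$ (when $t^2+at+b$ is reducible over the residue field $R/\m$), in which case both $\widehat{\RR}$ and $\widehat R(\widehat I)_{a,b}$ decompose compatibly as products of complete local rings. Once the semilocal argument above is in place, flat base change does the rest.
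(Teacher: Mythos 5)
The paper contains no proof of this lemma to compare against: it is imported verbatim from \cite[Remark 1.7]{BDS2}. Judged on its own, your argument is correct and complete, and it is essentially the argument that the cited remark rests on: $\RR \cong R \oplus It$ is module-finite over the noetherian local ring $R$, so its completion agrees with the flat base change $\RR \otimes_R \widehat R$, and base-changing the construction (most cleanly via the decomposition $R \oplus It$ with multiplication determined by the images of $a,b$) yields $\widehat R(\widehat I)_{a,b}$. Two remarks. First, your concern about ``possibly two maximal ideals'' is vacuous: since $I$ is a proper ideal of the local ring $R$, we have $I \subseteq \m$, hence $(\alpha-\beta)I \subseteq \m$ automatically, and Lemma \ref{ideals} together with Proposition \ref{minimal primes} shows that the two primes of $\RR$ over $\m$ coincide; thus $\RR$ is local whenever $R$ is, and no product decomposition ever occurs --- though your Jacobson-radical argument ($\m\RR \subseteq J$ and $J^N \subseteq \m\RR$ because $\RR/\m\RR$ is artinian) covers the semilocal case anyway, so nothing breaks. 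Second, in your Rees-presentation route, identifying the extension of the defining ideal with the contraction of $(t^2+at+b)\widehat R[t]$ requires knowing that this contraction is generated by $\{c(t^2+at+b) : c \in \widehat I^{\,2}\}$, the description established in \cite{BDS}, which transfers to $\widehat R$ because $I^2\widehat R = \widehat I^{\,2}$; your parenthetical argument via $R \oplus It$ sidesteps this point entirely and is the preferable write-up.
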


In this section we consider $R$ as an $\RR$-module with scalar
multiplication $(s+jt)r= rs+rj \alpha $ as in Remark \ref{hom}.

\begin{lem}
If $\Ann(I)=(0)$, then
$I \cong {\rm Hom}_{\RR}(R,\RR)$ as $R$-modules.
\end{lem}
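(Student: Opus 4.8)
The plan is to realize $R$ as a cyclic $\RR$-module and then to identify the $\Hom$ with an annihilator ideal. Set $S:=\RR$ and let $\pi\colon S\to R$ be the ring homomorphism of Remark~\ref{hom}, $s+jt\mapsto s+j\alpha$, so that the $S$-module structure on $R$ is precisely $x\cdot r=\pi(x)r$. Since $\pi$ is a surjective ring homomorphism, $R\cong S/K$ as $S$-modules, where $K=\ker\pi=\{\,j(t-\alpha)\mid j\in I\,\}=\{\,-j\alpha+jt\mid j\in I\,\}$ is an ideal of $S$. As $R$ is generated by $\bar 1$ over $S$, evaluation at $\bar 1$ gives the standard $S$-linear (hence $R$-linear) isomorphism $\Hom_S(R,S)=\Hom_S(S/K,S)\cong(0:_S K)$. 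Thus the statement reduces to computing this annihilator.

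The heart of the argument is a single multiplication in $S$. For $j\in I$ and $r+it\in S$, expanding $(j(t-\alpha))(r+it)$ with the product rule of $S$ and substituting $a=-(\alpha+\beta)$ and $b=\alpha\beta$ yields $j(r+\beta i)(t-\alpha)$. Because every element of $S$ is uniquely written as $r+it$ with $r\in R$ and $i\in I$, an element of the form $k(t-\alpha)=-k\alpha+kt$ vanishes if and only if $k=0$; hence $r+it$ kills all of $K$ exactly when $j(r+\beta i)=0$ for every $j\in I$, i.e. when $r+\beta i\in\Ann(I)$. This is the one place the hypothesis enters: $\Ann(I)=(0)$ forces $r=-\beta i$, so that $(0:_S K)=\{\,-\beta i+it\mid i\in I\,\}=\{\,i(t-\beta)\mid i\in I\,\}$.

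Finally I would exhibit the map $\phi\colon I\to(0:_S K)$, $\phi(i)=i(t-\beta)=-\beta i+it$, and verify it is the desired $R$-isomorphism. Injectivity and surjectivity are immediate from the unique representation of elements of $S$ and from the description of the annihilator above. For $R$-linearity one multiplies $i(t-\beta)$ by $\rho\in R$ (viewed in $S$ with zero $t$-coefficient) and reads off $\rho\cdot i(t-\beta)=(\rho i)(t-\beta)$, again a one-line use of the product rule. Composing $\phi$ with the evaluation isomorphism gives $I\cong\Hom_{\RR}(R,\RR)$ as $R$-modules.

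I expect the only genuine obstacle to be the bookkeeping in the annihilator computation. One must keep the $R$- and $S$-module structures separate (the relevant $R$-action on $\Hom_S(R,S)$ is the one induced on the target $S$ by the inclusion $R\hookrightarrow S$, not the one coming from the source), and one must simplify the product $(j(t-\alpha))(r+it)$ using both relations $a=-(\alpha+\beta)$ and $b=\alpha\beta$ before reading off the vanishing condition $r+\beta i\in\Ann(I)$. Everything else is formal.
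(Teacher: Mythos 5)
Your proof is correct, and it reaches the paper's conclusion by a cleaner but closely related route. The paper defines the isomorphism directly, $\varphi(i)=g_i$ with $g_i(r)=ri\beta-rit$, and proves surjectivity by writing $h(1)=s+jt$ and extracting from $\RR$-linearity the two conditions $i(-s\alpha-j\alpha\beta)=0$ and $i(s+j\beta)=0$ for all $i\in I$, whence $s=-j\beta$ since $\Ann(I)=(0)$. Your argument packages exactly this computation through the standard isomorphism $\Hom_{\RR}(\RR/K,\RR)\cong(0:_{\RR}K)$ for the cyclic module $R\cong\RR/K$, where $K=\ker\pi=\{\,j(t-\alpha)\mid j\in I\,\}$: the paper's two linearity constraints are precisely the statement that $h(1)$ annihilates $K$, since $\bigl(i(t-\alpha)\bigr)(s+jt)=i(s+\beta j)(t-\alpha)$ is your product identity with the letters renamed, and your description $(0:_{\RR}K)=\{\,i(t-\beta)\mid i\in I\,\}$ matches the paper's maps via $g_i(1)=-i(t-\beta)$ (a harmless sign). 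What your packaging buys is modularity: well-definedness and surjectivity are absorbed into a standard fact about cyclic modules, and the hypothesis $\Ann(I)=(0)$ enters exactly once, in the annihilator computation; the paper's version is self-contained and exhibits the homomorphisms explicitly. One small remark: the caution in your last paragraph is unnecessary, because the two candidate $R$-actions on $\Hom_{\RR}(R,\RR)$ (through the source $R$, or through $R\hookrightarrow\RR$ acting on the target) coincide here --- $\pi$ restricts to the identity on $R$, so $f(\rho r)=\rho f(r)$ for any $\RR$-linear $f$ --- but choosing the target action, as you do, is certainly the safe and natural convention.
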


\begin{proof}
For any $r \in R$ and $i \in I$ we set $g_i : R \rightarrow \RR$ with $g_i(r)=ri \beta - rit$,
that is an homomorphism of $\RR$-modules. Consider the map
$$
\varphi : I \rightarrow {\rm Hom}_{\RR}(R,\RR), \ i \mapsto g_i;
$$
it is easy to prove that this is an injective homomorphism of $R$-modules.

We claim that $\varphi$ is also surjective.
Consider $h \in {\rm Hom}_{\RR}(R,\RR)$, clearly this is determined by
$h(1) = s + jt$ where $s \in R$ and $j \in I$.
Since $h$ is an homomorphism of $\RR$-modules, for any $i \in I$ we have
$$
rs+rjt = r h(1)=h(r) = h(r-i \alpha + i \alpha) =
h((r-i \alpha + it) \cdot 1) =
$$
$$
=(r-i \alpha + it)h(1)=
(r-i \alpha + it)(s+jt)=
$$
$$
 = rs - i\alpha s- ij \alpha \beta +
(rj - ij \alpha + si + ij \alpha + ij \beta) t,
$$
then, $h$ is well defined only if for any $i \in I$

$$
\begin{cases}
i(-s \alpha -j \alpha \beta)=0 \\
i(s+j \beta)=0
\end{cases}
$$
and this implies that $s=-j \beta$ because $\Ann(I)=(0)$.
Hence $h=g_{-j}$ and $\varphi$ is an isomorphism.
\end{proof}

The following result is a generalization of \cite[Theorem 2.11]{A2} and \cite[Theorem 3.3]{BSTY}. The idea of the first implication is essentially the same of \cite{A2} and \cite{BSTY}, but it requires the previous two lemmas and Proposition \ref{Sn}. We recall that an ideal of $R$ is said to be a canonical ideal if it is a canonical module of $R$.

\begin{thm} \label{quasi Gorenstein}
Let $R$ be a noetherian local ring and suppose that $I$ is regular. Then, $\RR$ is quasi-Gorenstein if and only if $\widehat{R}$ satisfies Serre's condition $(S_2)$ and $I$ is a canonical ideal of $R$.
\end{thm}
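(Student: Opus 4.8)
The plan is to reduce to the complete case and then split into the two implications, the technical engine in both being the change-of-rings formula for canonical modules under a finite local map of equal dimension (Aoyama): if $A\to B$ is module-finite, $\dim A=\dim B$, and $A$ has a canonical module $\omega_A$, then $B$ has a canonical module $\omega_B\cong\Hom_A(B,\omega_A)$. Here $\RR=R\oplus It$ is module-finite over $R$ and, since $I$ is regular, $\dim\RR=\dim R$; we also regard $\RR$ as local, so that $(\alpha-\beta)I\subseteq\m$ by Lemma~\ref{ideals}. First I would note that $\RR$ is quasi-Gorenstein if and only if $\widehat{\RR}$ is (the canonical module being defined through completion), that $\widehat{\RR}\cong\widehat R(\widehat I)_{a,b}$ by Lemma~\ref{Completion}, and that ``$\widehat I$ is a canonical ideal of $\widehat R$'' is equivalent to ``$I$ is a canonical ideal of $R$'' because $\omega_{\widehat R}\cong\widehat{\omega_R}$ and $R\to\widehat R$ is faithfully flat. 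Hence we may replace $(R,I)$ by $(\widehat R,\widehat I)$ and assume $R$ complete; in particular $R$ then has a canonical module and the condition on $\widehat R$ becomes simply that $R$ satisfies $(S_2)$.

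For the forward implication I would argue as in \cite{A2} and \cite{BSTY}. If $\RR$ is quasi-Gorenstein then $\omega_{\RR}\cong\RR$, so $\RR$ satisfies $(S_2)$, and Proposition~\ref{Sn} with $n=2$ forces $R$ to satisfy $(S_2)$. Applying the change-of-rings formula to the surjection $\RR\to R$ of Remark~\ref{hom} gives $\omega_R\cong\Hom_{\RR}(R,\omega_{\RR})\cong\Hom_{\RR}(R,\RR)$, and the preceding lemma (valid since $I$ regular gives $\Ann(I)=(0)$) identifies this with $I$. Thus $\omega_R\cong I$, i.e.\ $I$ is a canonical ideal of $R$, completing this direction.

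For the converse, take $\omega_R=I$ and $R$ satisfying $(S_2)$. The change-of-rings formula for the inclusion $R\hookrightarrow\RR$ gives $\omega_{\RR}\cong\Hom_R(\RR,I)$ as $\RR$-modules. Writing a general $R$-homomorphism $f:\RR=R\oplus It\to I$ as $f(r+it)=rx+ci$ with $x=f(1)\in I$ and $c\in R$ determined by the restriction $f|_{It}\in\Hom_R(It,I)\cong\Hom_R(I,I)\cong R$ (the isomorphism $R\to\Hom_R(\omega_R,\omega_R)$ being where $(S_2)$ and $\omega_R\cong I$ enter), I would then exhibit a single $\RR$-module generator, the functional $\pi$ with $\pi(r+it)=i$, corresponding to the pair $(0,1)$. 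A direct computation of the $\RR$-action on $\Hom_R(\RR,I)$, using $a=-(\alpha+\beta)$, $b=\alpha\beta$ and the relation $ij\,t^2=-a\,ij\,t-b\,ij$ for $i,j\in I$, shows that $s+jt$ sends $\pi$ to the functional with pair $(j,\,s+(\alpha+\beta)j)$. Since $(s,j)\mapsto(j,\,s+(\alpha+\beta)j)$ is a triangular, hence bijective, substitution $R\oplus I\to I\oplus R$, the $\RR$-linear map $\xi\mapsto\xi\cdot\pi$ is an isomorphism $\RR\xrightarrow{\sim}\omega_{\RR}$; thus $\RR$ is quasi-Gorenstein.

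The main obstacle is exactly this last step: the $R$-module decomposition $\Hom_R(\RR,I)\cong I\oplus R$ only matches ranks, whereas quasi-Gorensteinness requires $\omega_{\RR}$ to be free of rank one over $\RR$ \emph{itself}, so one must upgrade an $R$-isomorphism to an $\RR$-isomorphism. The point that makes this work is the explicit choice of the generator $\pi$, which linearizes the $\RR$-action into a unipotent substitution; the hypotheses $(S_2)$ and $\omega_R\cong I$ serve precisely to place the two summands $I$ and $R$ in the correct $\RR$-module configuration. A secondary care, needed because the rings here are not assumed Cohen-Macaulay, is to justify the change-of-rings formula $\omega_{\RR}\cong\Hom_R(\RR,\omega_R)$ (and its counterpart through $\RR\to R$); this is where completeness of $R$ is used, through the existence of a dualizing complex.
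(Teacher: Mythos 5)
Your proposal is correct and follows essentially the same route as the paper: the forward direction uses the completion, the $(S_2)$ property of canonical modules, Proposition~\ref{Sn}, and the change-of-rings formula (HK/Aoyama) through $\RR \to R$ combined with the lemma $\Hom_{\RR}(R,\RR)\cong I$, exactly as in the paper. For the converse your generator $\pi$ yields literally the paper's map $\varphi(r+it)=f_{r+it}$ with $f_{r+it}(s+jt)=rj+i(s-ja)$, and your pair-decomposition check of bijectivity (resting on $\Hom_R(I,I)\cong R$ via Aoyama's result under $(S_2)$ of $\widehat R$) is just a self-contained version of the verification the paper delegates to \cite[Proposition 2.1]{BDS2}.
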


\begin{proof}
If $\RR$ is quasi-Gorenstein, it is well known that also $\widehat{R(I)_{a,b}} \cong \widehat{R}(\widehat{I})_{a,b}$ is quasi-Gorenstein. Consequently, since a canonical module always satisfies the condition $(S_2)$ (see \cite[Theorem 12.1.18]{BS}), it follows that $\widehat{R}(\widehat{I})_{a,b}$ satifies $(S_2)$ and, in the light of Proposition \ref{Sn}, also $\widehat{R}$ satisfies $(S_2)$. Moreover, since we have an homomorphism $\RR \rightarrow R$ (see Remark \ref{hom}), it follows from \cite[Satz 5.12]{HK} (or \cite[Theorem 1.2]{A2}) and the previous lemma that $\Hom_{\RR}(R,\RR) \cong I$ is a canonical module.

Conversely, using again \cite[Satz 5.12]{HK}, we get that a canonical modules of $\RR$ is $\Hom_R(\RR,I)$, because $I$ is a canonical ideal of $R$. To prove that $\RR$ is quasi-Gorenstein we only need to show an isomorphism $\varphi: \RR \rightarrow \Hom_R(\RR,I)$. To this aim, we set $f_{r+it}(s+jt)=rj+i(s-ja) \in I$ and $\varphi(r+it)=f_{r+it}$. To check that this is an isomorphism, it is possible to follow the same proof of \cite[Proposition 2.1]{BDS2}, bearing in mind that for the surjectivity one can use that $(I:I) \hookrightarrow \Hom_R(I,I) \cong R$, because $\widehat{R}$ satisfies Serre's condition $(S_2)$, see \cite[Proposition 2]{A}.
\end{proof}

\begin{rem} \rm
We notice that by the proof above, if $R$ satisfies the Serre's condition $(S_2)$ and $I$ is a canonical ideal, then $\RR$ is quasi-Gorenstein even if $t^2+at+b$ is not reducible in $R[t]$.
\end{rem}

\end{document}